\newtheorem{thm}{Theorem}[section]
\newtheorem{lem}[thm]{Lemma}
\newtheorem{definition}[thm]{Definition}
\newtheorem{proposition}[thm]{Proposition}
\theoremstyle{definition}
\begin{document}
\title[Continuous CSR for coinduced actions and relative ends]{Continuous cocycle superrigidity for coinduced actions and relative ends}
\author{Yongle Jiang}
\date{\today}
\address{Math Department, Sungkyunkwan University, Suwon 440-746, Korea.}
\curraddr{Institute of Mathematics, Polish Academy of Sciences, Warsaw, Poland}
\email{yjiang@impan.pl}

\begin{abstract}
We prove that certain coinduced actions for an inclusion of finitely generated commensurated subgroups with relative one end are continuous cocycle superrigid actions. We also show the necessity for the relative end assumption.
\end{abstract}

\keywords{Continuous cocycle superrigid actions, coinduced actions, relative ends, commensurated subgroups}

\subjclass[2010]{Primary 37A20; Secondary 20F65}

\maketitle

\section{Introduction}
This note is one of a series of papers to investigate the connection between continuous orbit equivalence theory (as formulated by Li in \cite{li}) and geometric group theory. Let us first recall the basic definitions briefly.

Let $G$ be a countable (discrete) group which acts on a compact metrizable space $X$ by homeomorphisms. We call a map $c: G\times X\to H$ is a \emph{continuous cocycle} if it is continuous and satisfies the cocycle identity, i.e. $c(st, x)=c(s, tx)c(t, x)$ holds for all $s$, $t$ in $G$ and all $x$ in $X$. Here, $H$ is a group. A continuous cocycle $c$ is called \emph{trivial} if there exist some group homomorphism $\phi: G\to H$ and a continuous map $b: X\to H$ such that $c(s, x)=b(sx)^{-1}\phi(s)b(x)$ for all $s$, $t$ in $G$ and all $x$ in $X$. We are interested in finding \emph{continuous cocycle superrigid actions}, i.e. actions $G\curvearrowright X$ such that every continuous cocycle for it is trivial for all countable group $H$. We refer the readers to \cite{cj, cj1, li, li2} for the background and known results. 
 
Now, let us explain two motivations to study coinduced actions (in the topological setting) in this note.

One comes from our previous work with Chung. In \cite{cj}, we proved that full shifts of non-torsion one-ended groups are continuous cocycle superrigid actions. Later on, Cohen \cite{cohen} removed the non-torsion assumption. Combining it with the work of Li in \cite{li}, this yields new examples of continuous orbit equivalence superrigid actions of a wide class of groups. 

It is well-known that full shifts of amenable groups (or even sofic groups, see \cite{kl}) have positive (topological) entropy, one may wonder whether continuous cocycle superrigid actions with zero entropy exist. In \cite[Example 5.4]{sch}, Schmidt gave an explicit example of this type when the acting group $G$ is $\mathbb{Z}^d$, where $d\geq 2$, but we are unaware of any other examples. Hence, looking for a systematic way to produce such actions, it is natural to consider coinduced actions since 
full shifts can be regarded as degenerate cases of coinduced actions and taking coinduction preserves topological (sofic) entropy (see \cite[Proposition 6.22]{ben}).

The other one is the work of Drimbe. In \cite{d}, he proved a cocycle superrigidity theorem for a large class of coinduced actions in the measurable setting, which generalized the celebrated Popa's cocycle superigidity theorem for Bernoulli shifts. Since our work in \cite{cj} can be thought of as a topological version of Popa's cocycle superrigidity theorem \cite{popa1, popa2}, it is natural to expect a topological version of Drimbe's result should exist.

Motivated by these questions, we generalize the results in \cite{cj, cohen} to coinduced actions using the notion of relative ends, a basic concept in geometric group theory.

\begin{thm}\label{main thm 1}
Let $G$ be a finitely generated infinite group and $K<G$ be a finitely generated commensurated subgroup such that $\widetilde{e}(G, K)=1$. Consider any continuous action $K\curvearrowright X$, where $X$ is any compact metrizable space, if the action has a fixed point, then any continuous cocycle for the coinduced action $G\curvearrowright Y:=X^{G/K}$ into any countable group $H$ is trivial, i.e. it is cohomologous to a group homomorphism from $G$ to $H$ via a continuous transfer map $b: Y\to H$.
\end{thm}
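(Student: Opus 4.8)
The plan is to construct the trivialisation explicitly. Write $x_0\in X$ for the $K$-fixed point and let $y_0\in Y=X^{G/K}$ be the constant configuration with every coordinate equal to $x_0$. Since the coinduced action permutes the coordinates indexed by $G/K$ through the $G$-action on $G/K$ and acts on the individual entries only by elements of $K$, and since these fix $x_0$, the point $y_0$ is globally fixed: $g\cdot y_0=y_0$ for all $g$. Evaluating the cocycle identity at $y_0$ gives $c(gg',y_0)=c(g,y_0)c(g',y_0)$, so $\phi(g):=c(g,y_0)$ is a homomorphism $G\to H$. Fixing a finite symmetric generating set $S$, I would reduce the theorem to the existence of a continuous $b\colon Y\to H$ with $b(y_0)=e$ and $c(s,y)=b(sy)^{-1}\phi(s)b(y)$ for all $s\in S$ and all $y\in Y$; the cocycle identity together with the homomorphism property of $\phi$ then upgrades this relation from $S$ to all of $G$.

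Two structural facts come first. As $Y$ is compact metrizable and $H$ is discrete, each map $y\mapsto c(s,y)$ is locally constant with finite image, so it depends only on the coordinates in a finite set $F_s\subseteq G/K$; set $F=\bigcup_{s\in S}F_s$. Next, let $Y_0\subseteq Y$ be the configurations equal to $x_0$ off a finite set of coordinates. Because entries are twisted only by $K$ and $x_0$ is $K$-fixed, $Y_0$ is a dense, $G$-invariant subset, and whenever a sequence $g_n\in G$ pushes every fixed coordinate out of the (finite) support, one has $g_n\cdot y\to y_0$. I would define $b$ first on $Y_0$ and then extend it by continuity.

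For $y\in Y_0$ the natural definition, dictated by the desired relation $b(y)=\phi(g)^{-1}b(gy)c(g,y)$ together with $b(g_ny)\to b(y_0)=e$, is
\[ b(y):=\lim_{n\to\infty}\phi(g_n)^{-1}\,c(g_n,y), \]
for a sequence $g_n=s_n\cdots s_1$ (letters in $S$) driving the support of $y$ out to infinity in the Schreier coset graph $\mathrm{Sch}(G,K,S)$. This limit stabilises for an honest reason: comparing the ordered products defining $c(g_n,y)$ and $\phi(g_n)=c(g_n,y_0)$ step by step, a step $s_j$ contributes a factor different from $\phi(s_j)$ only when the translated support meets the finite window $F$, and along a path running out to infinity this happens only finitely often; once the frame leaves the window permanently one checks that $\phi(g_n)^{-1}c(g_n,y)=\phi(g_m)^{-1}c(g_m,y)$ is eventually constant. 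A direct computation with the cocycle identity then shows that $b$ so defined satisfies the required relation on $Y_0$, and that $b(y_0)=e$.

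The crux is that $b(y)$ must not depend on the chosen path to infinity, and this is exactly where $\widetilde e(G,K)=1$ enters. The relative-ends hypothesis should be reformulated as the statement that the complement of every finite set of vertices in the coset graph has a single infinite component; any two sequences $g_n$ driving the support out then differ by loops lying, eventually, in the complement of the finite region $F\cup\mathrm{supp}(y)$, and on such loops every factor equals its value $\phi(s)$ at $y_0$, so the telescoped correction is trivial and $b$ is well defined and (by the same uniform estimate) continuous, extending to all of $Y$. I expect this path-independence to be the main obstacle, for two reasons. First, $K$ is only commensurated, not normal, so the entrywise twist is governed by the canonical $K$-valued cocycle on $G\times G/K$; one must use the finite-index condition $[K:K\cap gKg^{-1}]<\infty$ to guarantee that driving the support out disturbs only finitely much data and that the twist acts trivially on the far-away entries. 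Second, the vanishing of the loop monodromy is precisely what breaks when $\widetilde e(G,K)\ge 2$ — matching the necessity of the hypothesis announced in the abstract — so the argument must genuinely consume one-endedness rather than mere connectivity of the coset graph.
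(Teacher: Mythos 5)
Your overall strategy---extract the homomorphism $\phi$ from the fixed configuration, define $b$ on finitely supported configurations by pushing their support to infinity, invoke a one-endedness hypothesis for path-independence, then extend continuously---is exactly the paper's (following Cohen). But there is a genuine gap at the step you yourself call the crux, and it is precisely the pitfall the paper flags in remark (2) of Section \ref{section on remarks}: you work in the Schreier coset graph, and the path-independence argument cannot be telescoped there. Concretely, to compare the stabilized values along two sequences $g_n=s_n\cdots s_1$ and $g'_m=s'_m\cdots s'_1$ you must show $c(g'_Mg_N^{-1}, g_Ny)=\phi(g'_Mg_N^{-1})$ for large $N,M$, and your ``far-away factors equal $\phi$'' mechanism requires spelling the \emph{specific} element $g'_Mg_N^{-1}$ as a word in $S$ whose partial products keep the translated support outside the window $F$. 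A path in the Schreier graph from $aK$ to $bK$ avoiding $F$ only produces a word $w$ with $waK=bK$, i.e.\ $w\in bKa^{-1}$: it pins down the group element only up to a conjugate of $K$, and correcting this discrepancy forces you to insert letters from (conjugates of) $K$ into the word, whose cocycle contributions are exactly the terms you have no control over. When $\mathrm{supp}(y)$ consists of several cosets the problem is compounded: all support points are driven by the same letters, and since left multiplication is \emph{not} an automorphism of the Schreier graph, keeping one trajectory far from $F$ gives no control over the windows attached to the others. A further warning sign is that one-endedness of the Schreier graph is the condition $e(G,K)=1$ (Scott), which is in general weaker than the hypothesis $\widetilde e(G,K)=1$; in view of Theorem \ref{main thm 2}, an argument consuming only the weaker condition should be regarded with suspicion.

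The paper resolves exactly this difficulty by replacing the Schreier graph with the Conner--Mihalik coset graph $\Lambda(S,K,G)$ (edge from $gK$ to $fK$ iff $gKs\cap fK\neq\emptyset$): this graph is locally finite \emph{because} $K$ is commensurated, $G$ acts on it by isometries, and its number of ends equals $\widetilde e(G,K)$ when $K$ is finitely generated with generating set $T\subseteq S$, so the actual hypothesis is what gets consumed. Crucially, a path in $\Lambda$ spells a word of the form $\prod_j y_js_jx_j$ with $x_j,y_j\in K$, and the key Lemma \ref{lem 3.3} shows the inserted $K$-letters are harmless: writing them as words in $T\subseteq S$ (here the finite generation of $K$, which your proposal never uses, is essential) and using that elements of $K$ fix the base vertex $K$ of $\Lambda$, all of their dependence windows stay in the $L$-neighborhood of the path. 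Finally, your continuity claim (``by the same uniform estimate'') also needs a genuine argument: a function defined on the dense set $Y_0$ need not extend continuously, and the paper has to prove (Proposition \ref{prop 4.4}, via the bi-infinite geodesic of Lemma \ref{lemma 2.3} and the separation Lemma \ref{lemma 2.4}) that $b$ factors through the restriction $y\mapsto y|_{B(3L,\,1_{\Lambda(S,K,G)})}$ to a fixed finite window; nothing in your sketch supplies this.
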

Note that the main results in \cite{cj, cohen} correspond to the degenerate case $K=\{e\}$.

We also generalize \cite[Theorem 2]{cj} to show the necessity on the relative end assumption.
\begin{thm}\label{main thm 2}
Let $K\leq G$ be countable groups. If $\widetilde{e}(G, K)>1$. Then there exists a continuous action $K\curvearrowright X$ with fixed points, where $X$ is a compact metrizable space, such that there exists a non-trivial continuous cocycle for the coinduced action $G\curvearrowright X^{G/K}$ into $\mathbb{Z}_2$. 
\end{thm}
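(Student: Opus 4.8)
The plan is to mirror the construction in \cite[Theorem 2]{cj}, where the case $K=\{e\}$ is treated, feeding in the almost-invariant set supplied by the relative-end hypothesis. First I would extract from $\widetilde e(G,K)>1$ a subset $A\subseteq G$ that is almost invariant relative to $K$: for every $g\in G$ the symmetric difference $gA\triangle A$ is $K$-finite (contained in finitely many cosets of $K$), while $A$ itself is neither $K$-finite nor co-$K$-finite. This is the defining feature of having more than one relative end, and the map $g\mapsto gA\triangle A$ is then a crossed homomorphism into the $G$-module of $K$-finite subsets modulo finite sets.

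The second step is to choose the $K$-space $X$. I would take $X$ to be a compactification of $K$ on which $K$ acts by left translation and which carries a $K$-fixed point $x_0$; the first candidate is the one-point compactification $K\cup\{\infty\}$ with $x_0=\infty$. The fixed point $x_0$ produces the required global fixed point of the coinduced action (the constant map), matching the hypothesis. Fixing the continuous function $\phi=\mathbf 1_{\{e\}}\colon X\to\mathbb Z_2$, a point $f\in Y=X^{G/K}$ then determines on each coset $C\in G/K$ either no marked point or a single one, and I write $\Sigma_f=\{h\in G:\phi(f(h))=1\}\subseteq G$ for the resulting set of marked points, which meets every coset at most once.

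With this data I would define $c\colon G\times Y\to\mathbb Z_2$ by $c(g,f)=|\Sigma_f\cap(g^{-1}A\triangle A)|\bmod 2$. This is well defined and finite because $g^{-1}A\triangle A$ is $K$-finite while $\Sigma_f$ meets each coset at most once, so the intersection is finite. The cocycle identity $c(st,f)=c(s,tf)+c(t,f)$ follows formally from the bilinearity of the parity pairing together with the crossed-homomorphism identity for $g\mapsto g^{-1}A\triangle A$ and the equivariance $\Sigma_{gf}=g\Sigma_f$; this is a routine computation. Finally, nontriviality is proved exactly as in \cite{cj}: if $c$ were cohomologous to a homomorphism via a continuous transfer $b\colon Y\to\mathbb Z_2$, then $b$ would factor through finitely many $G/K$-coordinates by compactness, whereas the formal primitive $f\mapsto|\Sigma_f\cap A|$ of $c$ genuinely involves the infinitely many cosets on which $A$ and its complement are both large, forcing a contradiction with the fact that $A$ is neither $K$-finite nor co-$K$-finite.

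The main obstacle is continuity of $c$. The subtlety is that the $K$-trace $t^{-1}(g^{-1}A\triangle A)\cap K$ of $g^{-1}A\triangle A$ on a coset $tK$ may be an \emph{infinite} subset of $K$, and the indicator of an arbitrary infinite subset is not continuous on a one-point compactification; so the naive choice $X=K\cup\{\infty\}$ need not suffice. I expect the real work to lie in choosing $X$ more carefully, for instance as the Stone space of a countable $K$-invariant Boolean subalgebra of $\mathcal P(K)$ generated by the coset-traces of $A$, so that all the finitely many trace-indicators occurring for each fixed $g$ become clopen, while keeping $X$ compact metrizable with a $K$-fixed point. This is precisely the point at which the relative-end hypothesis, rather than the weaker condition that the coset graph has more than one end, must be used; I would treat separately the favorable case where $A$ can be chosen $K$-invariant, in which $A$ descends to a genuinely almost-invariant subset of $G/K$, $X=\{0,1\}$ with the trivial $K$-action works, and the argument of \cite{cj} applies verbatim.
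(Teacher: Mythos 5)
Your skeleton is right: $\widetilde{e}(G,K)>1$ does yield a set $A\subseteq G$, neither $K$-finite nor co-$K$-finite, with every $gA\triangle A$ $K$-finite; pairing configurations against $g^{-1}A\triangle A$ modulo $2$ is the correct cocycle; and you have correctly located the crux, namely that the coset traces $t^{-1}(g^{-1}A\triangle A)\cap K$ may be infinite, so no naive compactification of $K$ makes this pairing continuous. But the proposal stops exactly at that point, and the repair you sketch does not work as stated. A $K$-fixed point of the Stone space of a $K$-invariant Boolean algebra $\mathcal{B}\subseteq \mathbb{Z}_2^{K}$ is a $K$-invariant ultrafilter on $\mathcal{B}$, and such ultrafilters need not exist: if $\mathcal{B}$ happens to contain a set $B$ with $K=B\sqcup kB$ for some $k\in K$ (as for evens/odds in $\mathbb{Z}$), an invariant ultrafilter would have to contain exactly one of $B$, $kB$ while containing both or neither; nothing in your construction excludes such traces. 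Without the fixed point you lose both the hypothesis of the theorem and the step that kills the homomorphism part of the cocycle. Nor can you fall back on your ``favorable case'': choosing $A$ right-$K$-invariant amounts to the Houghton--Scott condition $e(G,K)>1$, and by Proposition \ref{property of relative ends} (3) the inequality runs $e(G,K)\leq\widetilde{e}(G,K)$, so that condition is \emph{stronger} than the hypothesis, not weaker as you assert; $\widetilde{e}(G,K)>1$ does not put you in the favorable case. Finally, your non-triviality argument is tied to the one-marked-point-per-coset picture, which any replacement of $X$ destroys, so it too would have to be redone.

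The paper resolves precisely this difficulty by linearizing: it replaces multiplicative characters (ultrafilters) by additive ones, i.e.\ Pontryagin duality. By Proposition \ref{property of relative ends} (2), $\widetilde{e}(G,K)>1$ gives a crossed homomorphism $c: G\to\mathfrak{F}_{K}G$ that is not a coboundary (this is exactly your $g\mapsto gA\triangle A$). One writes $\mathfrak{F}_{K}G\cong\mathbb{Z}G\otimes_{\mathbb{Z}K}\mathbb{Z}_2^{K}$, the tensor components being exactly your coset traces, chooses a countable $\mathbb{Z}K$-submodule $P\subseteq\mathbb{Z}_2^{K}$ with $c(G)\subseteq\mathbb{Z}G\otimes_{\mathbb{Z}K}P$, and sets $X:=\widehat{P}$. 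This is compact metrizable, $K$ acts on it, and the trivial character $0$ is automatically a $K$-fixed point: this is what the dual group buys you that the Stone space cannot. Moreover $G\curvearrowright\widehat{\mathbb{Z}G\otimes_{\mathbb{Z}K}P}$ is conjugate to the coinduced action $G\curvearrowright(\widehat{P})^{G/K}$ by \cite[Proposition 6.23]{ben}, and $c'(g,y):=y(c(g^{-1}))$ is a $\{\pm 1\}$-valued cocycle whose continuity is automatic, being evaluation of characters at fixed group elements; the continuity problem simply disappears. Non-triviality is then proved not combinatorially but by evaluating at the fixed point to kill the homomorphism, invoking weak mixing of the coinduced action with respect to the Haar product measure (Proposition \ref{product measure for coinduced actions}, valid since $[G:K]=\infty$) to reduce the transfer map to a character $b\in\mathbb{Z}G\otimes_{\mathbb{Z}K}P$, and concluding $c(g)=b-gb$, contradicting the choice of $c$. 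So the one missing idea in your proposal is the passage from the Boolean/ultrafilter picture to the $\mathbb{Z}_2$-module/dual-group picture; everything else you wrote is, in disguised form, already the paper's argument.
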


Note that in this paper, $\mathbb{Z}_2$ means $\mathbb{Z}/2\mathbb{Z}$. The outline of this paper is as follows. 

In Section \ref{section on preliminaries}, we give most of our definitions and some basic facts. In Section \ref{section on geometry of coset graph}, we review the geometry of the coset graph, which was studied in \cite{cm}. Following the proof in \cite{cohen}, we prepare some lemmas in Section \ref{section on cocycles over the coinduced actions} and then prove Theorem \ref{main thm 1} in Section \ref{section on the proof of theorem 1}. In Section \ref{section on proof of theorem 2}, we prove Theorem \ref{main thm 2}. Then we conclude with some remarks in Section \ref{section on remarks}. 

\section{Preliminaries}\label{section on preliminaries}
In this section, we recall the basic concepts that are used later.

\subsection*{Commensurated subgroups}
If $K$ is a subgroup of $G$, then the commensurator of $K$ in $G$ is 
$Comm_G({K}):=\{g\in G~|~ gK g^{-1}\cap K ~\mbox{has finite index in both $K$ and $gK g^{-1}$}\}.$
A subgroup $K$ is \emph{commensurated} in $G$ if $Comm_G({K})=G$. The term ``commensurated" was used in \cite{sw} and we follow that terminology rather than saying $(K, G)$ is a ``Hecke" pair as in \cite{kri, tz} or $K$ is almost normal in $G$ as in \cite{anan}.

One main property of commensurated subgroups we use is the following, see \cite[Section 2]{cm}.

\begin{proposition}\label{property of commensurators}
Let $K$ be a subgroup of $G$. Then $Comm_G({K})=G$ if and only if for any $g\in G$, there is a finite subset $F_g\subseteq G$ such that $Kg\subseteq F_gK:=\cup_{s\in F_g}sK$. 
\end{proposition}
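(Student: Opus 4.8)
The plan is to translate both conditions into a single statement about the index $[K : gKg^{-1}\cap K]$. The crucial observation is that the containment $Kg \subseteq F_g K$ for some finite $F_g$ is equivalent to saying that the right coset $Kg$ meets only finitely many left cosets of $K$: the left cosets $\{sK\}_{s}$ partition $G$, and $F_g K = \bigcup_{s\in F_g}sK$ is exactly the union of those left cosets indexed by $F_g$, so $Kg$ is covered by finitely many of them precisely when it meets finitely many of them.

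First I would count exactly how many left cosets $Kg$ meets. For $k,k'\in K$, the elements $kg$ and $k'g$ lie in the same left coset of $K$ iff $kgK=k'gK$, i.e. iff $g^{-1}(k')^{-1}kg\in K$, i.e. iff $(k')^{-1}k\in gKg^{-1}\cap K$. Hence the correspondence $kgK \leftrightarrow k(gKg^{-1}\cap K)$ is a well-defined bijection between the left cosets of $K$ that meet $Kg$ and the left cosets of $gKg^{-1}\cap K$ in $K$. Consequently $Kg$ meets exactly $[K : gKg^{-1}\cap K]$ left cosets of $K$, so $Kg\subseteq F_g K$ holds for some finite $F_g$ if and only if $[K : gKg^{-1}\cap K]<\infty$. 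With this equivalence the forward direction is immediate: if $Comm_G(K)=G$, then every $g$ satisfies $[K : gKg^{-1}\cap K]<\infty$ directly from the definition of the commensurator, and the required finite $F_g$ exists.

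For the converse I would need to recover the full commensurator condition, which demands that $gKg^{-1}\cap K$ have finite index in both $K$ and $gKg^{-1}$. The coset-covering hypothesis applied to $g$ gives at once $[K : gKg^{-1}\cap K]<\infty$. To obtain the second index, apply the hypothesis to $g^{-1}$, yielding $[K : g^{-1}Kg\cap K]<\infty$; conjugating by $g$, which is an index-preserving isomorphism, carries $g^{-1}Kg\cap K$ to $K\cap gKg^{-1}$ and $K$ to $gKg^{-1}$, so that $[gKg^{-1} : gKg^{-1}\cap K]=[K : g^{-1}Kg\cap K]<\infty$. Thus $g\in Comm_G(K)$ for every $g$, i.e. $Comm_G(K)=G$.

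I expect the main subtlety to lie in the converse: the coset-covering condition is manifestly one-sided, controlling only $[K : gKg^{-1}\cap K]$, whereas the commensurator requires finite index in \emph{both} subgroups. The key idea resolving this is to feed the hypothesis back in at $g^{-1}$ and transport the resulting index along conjugation, which is exactly what upgrades the one-sided finiteness into the symmetric condition defining $Comm_G(K)$.
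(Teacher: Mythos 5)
Your proof is correct and takes essentially the same route as the paper's: both translate the covering condition $Kg\subseteq F_gK$ into finiteness of $[K:gKg^{-1}\cap K]$ via the observation that $kgK=k'gK$ iff $(k')^{-1}k\in gKg^{-1}\cap K$, and both obtain the second index by applying the hypothesis at $g^{-1}$ and transporting it by conjugation. Your exact bijective count is a slightly cleaner packaging (it also dispatches the ``only if'' direction, which the paper dismisses as easy), but the underlying argument is identical.
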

\begin{proof}
The ``only if" direction is easy. For the ``if" direction, see \cite[Theorem 2.3]{cm}. For completeness, we sketch the proof here. Fix any $g\in G$, the assumption is $g^{-1}K\subseteq KF_g^{-1}$. Then define a map $t: K\to F_g$ such that $g^{-1}k\in Kt(k)^{-1}$. Once can check that $t(k)=t(k')$ implies $(K\cap gKg^{-1})k=(K\cap gKg^{-1})k'$. Write $t(K)=\{t(k_1), \dots, t(k_n)\}$, then $K=\cup_{i}(K\cap gKg^{-1})k_i$. Hence $[K: K\cap gKg^{-1}]<\infty$; similarly, $[K: K\cap g^{-1}Kg]=[gKg^{-1}: K\cap gKg^{-1}]<\infty$.
\end{proof}

We list some classical examples of commensurated subgroups below.

\begin{itemize}
\item $K$ be any finite subgroup or subgroup with finite index.
\item $K$ be any normal subgroup.
\item $K=GL_n(\mathbb{Z})<G=GL_n(\mathbb{Q})$ for all $n$ by \cite{kri}.
\item $K=\mathbb{Z}\rtimes \{1\}$, $G=\mathbb{Q}\rtimes \mathbb{Q}_+^*$ \cite{bc}.
\item $K=\langle x\rangle<G=BS(m, n)=\langle t, x:~ t^{-1}x^mt=x^n \rangle$ by \cite[Example 3.1]{cm}. 
\item More generally, let $G$ act by isometries on a locally finite metric space (here, a metric space is \emph{locally finite} if its balls have a finite number of elements), then take $K$ to be the stabilizer of any vertex (\cite[Remark 1.3]{anan}).
\end{itemize}
Using these examples, one can build more examples using basic properties of commensurators, see e.g. \cite{cm, tz}.

\subsection*{Coinduced actions}
Let $K<G$ be a subgroup. Fix any lift $L: G/K\to G$, i.e. $L$ is a map such that $L(sK)K=sK$ for all $s\in G$. Then since $gL(sK)K=gsK=L(gsK)K$, we get $L(gsK)^{-1}gL(sK)\in K$, which is denoted by $\delta(g, sK)$. One can check that $\delta: G\times G/K\to K$ is a cocycle, i.e. $\delta(g_1g_2, sK)=\delta(g_1, g_2sK)\delta(g_2, sK)$ holds for all $g_1$, $g_2$ and $s$ in $G$.

Let $\alpha: K\curvearrowright X$ be a continuous action, where $X$ is a compact metrizable space, the \emph{coinduced action} $G\curvearrowright Y:=X^{G/K}$ is defined as $(gy)_{sK}:=\alpha(\delta(g, g^{-1}sK))y_{g^{-1}sK}$ for all $s, g\in G$ and $y\in Y$, where $Y$ is given the product topology. It is routine to check that the coinduced action is isomorphic to the left shift action of $G$ on the subspace $Z:=\{z\in X^G: z_{gk}=\alpha(k^{-1})z_g, ~\mbox{for all $g\in G$, $k\in K$}\}$ of $X^G$ and the choice of lift $L$ does not affect the coinduced action (up to conjugacy).

\begin{proposition}\label{fixed pts for coinduced actions}
The coinduced action $G\curvearrowright X^{G/K}$ has a fixed point if and only if there exists some $x\in X$ such that for all $sK=tK$, $\alpha(\delta(s, K))x=\alpha(\delta(t, K))x$. In particular, if $K\curvearrowright X$ has a fixed point, then the coinduced action has a fixed point.
\end{proposition}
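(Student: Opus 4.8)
The plan is to establish an explicit bijection between fixed points $y\in Y$ of the coinduced action and points $x\in X$ satisfying the stated compatibility condition, using the evaluation $x=y_K$ at the trivial coset in one direction and the formula $y_{sK}:=\alpha(\delta(s,K))x$ in the other. The whole statement is a direct unwinding of the definition $(gy)_{sK}=\alpha(\delta(g,g^{-1}sK))y_{g^{-1}sK}$ together with the cocycle identity for $\delta$, so the work is bookkeeping rather than any genuine difficulty.

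For the forward direction, suppose $y$ is a fixed point. Then the fixed point equation reads $y_{sK}=(gy)_{sK}=\alpha(\delta(g,g^{-1}sK))y_{g^{-1}sK}$ for all $g,s$; specializing to $s=g$ collapses the index $g^{-1}sK$ to the trivial coset $K$ and gives $y_{gK}=\alpha(\delta(g,K))y_K$ for every $g\in G$. I would set $x:=y_K$. Since the left-hand side $y_{gK}$ is a coordinate of $y$ and therefore depends only on the coset $gK$, it follows at once that $\alpha(\delta(s,K))x=\alpha(\delta(t,K))x$ whenever $sK=tK$, which is exactly the required condition.

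For the reverse direction, given such an $x$, I would define $y\in X^{G/K}$ by $y_{sK}:=\alpha(\delta(s,K))x$. The hypothesis on $x$ is precisely what guarantees that this value is independent of the chosen representative $s$ of the coset, so $y$ is well defined. It then remains to verify $gy=y$. Expanding, $(gy)_{sK}=\alpha(\delta(g,g^{-1}sK))\,\alpha(\delta(g^{-1}s,K))\,x=\alpha\bigl(\delta(g,g^{-1}sK)\,\delta(g^{-1}s,K)\bigr)x$, and applying the cocycle identity $\delta(g_1g_2,\,\cdot\,)=\delta(g_1,g_2\,\cdot\,)\delta(g_2,\,\cdot\,)$ with $g_1=g$, $g_2=g^{-1}s$ at the trivial coset collapses the product $\delta(g,g^{-1}sK)\,\delta(g^{-1}s,K)$ to $\delta(s,K)$. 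Hence $(gy)_{sK}=\alpha(\delta(s,K))x=y_{sK}$, so $y$ is a fixed point.

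Finally, the ``in particular'' clause is immediate: if $x\in X$ is fixed by the action $\alpha$ of $K$, then since $\delta(s,K)\in K$ for every $s$ we have $\alpha(\delta(s,K))x=x$, so both sides of the compatibility condition equal $x$ and it holds trivially. The only place that calls for care is the correct matching of cosets in the cocycle identity in the reverse direction; I do not expect any real obstacle beyond that careful bookkeeping.
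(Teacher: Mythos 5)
Your proof is correct and follows essentially the same route as the paper: extracting $x=y_K$ via the specialization $g=s$ in the fixed-point equation for the forward direction, and defining $y_{sK}:=\alpha(\delta(s,K))x$ for the reverse. The only difference is that you spell out the verification that this $y$ is fixed (via the cocycle identity for $\delta$), which the paper leaves as a one-line "check"; your bookkeeping there is accurate.
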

\begin{proof}
For the ``if" direction, check that $y_{sK}:=\alpha(\delta(s, K))x$ defines a fixed point for the coinduced action. For the ``only if" direction, let $y$ be a fixed point of the coinduced action, then for all $g$, $s$ in $G$, $y_{sK}=\alpha(\delta(g, g^{-1}sK))y_{g^{-1}sK}$. In particular, take $g=s$, we deduce that $y_{sK}=\alpha(\delta(s, K))y_{K}$; hence, for all $sK=tK$, $\alpha(\delta(s, K))y_{K}=\alpha(\delta(t, K))y_{K}$. Take $x=y_{K}$.
\end{proof}

\begin{proposition}\label{finite union implies finite index}
Let $K\leq G$ be groups. If there exists some finite subset $F$ of $G$ such that $G=\cup_{s, t\in F}sKt$. Then $[G:K]<\infty$. 
\end{proposition}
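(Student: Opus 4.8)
The plan is to convert the two-sided covering $G=\bigcup_{s,t\in F}sKt$ into a one-sided covering by cosets of conjugates of $K$, and then invoke a classical covering theorem. First I would rewrite each piece as a single left coset of a conjugate subgroup: since $Kt=t(t^{-1}Kt)$, we have $sKt=(st)\,K_t$, where $K_t:=t^{-1}Kt$. Hence the hypothesis says that $G$ is the union of the finitely many left cosets $\{(st)K_t : s,t\in F\}$ of the subgroups $K_t$, $t\in F$, each of which is a conjugate of $K$.

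Next I would record that conjugation by $t$ is an automorphism of $G$ carrying $K$ onto $K_t$, so it induces a bijection between the left cosets of $K$ and those of $K_t$; therefore $[G:K_t]=[G:K]$ for every $t\in F$. Thus all the subgroups appearing in the covering have one and the same index, namely $[G:K]$.

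The key step is then a classical theorem of B.~H.~Neumann on coverings of a group by finitely many cosets of subgroups: in any such covering at least one of the subgroups has finite index in $G$ (equivalently, the cosets of the infinite-index subgroups may be discarded without destroying the covering). Applying this to the covering produced in the first step yields some $t\in F$ with $[G:K_t]<\infty$, and by the previous paragraph $[G:K]=[G:K_t]<\infty$, as desired.

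I expect the genuine obstacle to be precisely the two-sidedness of the hypothesis, which is what defeats a naive counting argument. Indeed, a single piece $sKt$ can meet infinitely many left cosets of $K$: the number of left cosets meeting $sKt$ equals the number contained in $sKtK$, namely $[K:K\cap tKt^{-1}]$, which need not be finite. Hence the finiteness of $[G:K]$ cannot be read off piece by piece, and it is only the global fact that these finitely many pieces exhaust all of $G$ that forces finite index; Neumann's theorem is exactly the tool that extracts this from the covering.
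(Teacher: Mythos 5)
Your proof is correct and follows essentially the same route as the paper: the paper likewise rewrites $sKt=(st)t^{-1}Kt$ to get a covering of $G$ by finitely many left cosets of the conjugates $K_t=t^{-1}Kt$, and then invokes a covering lemma (citing \cite[Lemma 2.7]{d}, which is a form of B.~H.~Neumann's theorem) to conclude that some $K_t$, and hence $K$ itself, has finite index.
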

\begin{proof}
Write $K_t:=t^{-1}Kt$ for $t\in F$, then $G=\cup_{t\in F}F_tK_t$, where $F_t:=Ft$. By \cite[Lemma 2.7]{d}, we deduce that for some $t$, $[G: K_t]<\infty$, hence $[G: K]<\infty$.
\end{proof}

\begin{proposition}\label{product measure for coinduced actions}
Let $\mu$ be a $K$-invariant probability measure on $X$. Then 

(1) the product measure $\nu:=\mu^{G/K}$ is invariant under the coinduced action. 

(2) the coinduced action is weakly mixing with respect to $\nu$ if $K$ is a subgroup of $G$ with infinite index. 
\end{proposition}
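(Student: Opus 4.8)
The plan is to prove the two parts separately, with part (1) being a routine invariance check and part (2) requiring the genuine work.

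For part (1), I would work in the product model $Y = X^{G/K}$ with the explicit formula $(gy)_{sK} = \alpha(\delta(g, g^{-1}sK)) y_{g^{-1}sK}$. The key observation is that the coordinate map $y \mapsto y_{sK}$ pushes the product measure $\nu = \mu^{G/K}$ forward to $\mu$, and the $g$-action permutes the coordinates (indexed by $sK \mapsto g^{-1}sK$, a bijection of $G/K$) while additionally applying the homeomorphism $\alpha(\delta(g, g^{-1}sK))$ in each fixed coordinate. Since $\mu$ is $K$-invariant, each such $\alpha(k)$ preserves $\mu$, and a permutation of coordinates preserves the product measure; I would check invariance on the generating cylinder sets $\prod_{sK \in G/K} A_{sK}$ (with $A_{sK} = X$ for all but finitely many $sK$) and conclude $g_*\nu = \nu$ by the uniqueness part of Carathéodory/the $\pi$-$\lambda$ argument. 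This is the straightforward half.

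For part (2), the standard route to weak mixing is to show that the diagonal action $G \curvearrowright (Y \times Y, \nu \times \nu)$ is ergodic. I would exploit the infinite-index hypothesis through the fact that $G/K$ is infinite, so I can find infinitely many distinct cosets; the heuristic is that the coinduced action behaves like a generalized Bernoulli shift over the infinite index set $G/K$, and generalized Bernoulli shifts $G \curvearrowright X^{I}$ with $I$ an infinite $G$-set having infinite orbits (or no finite orbits) are weakly mixing. Concretely, I would verify the spectral criterion: show there are no non-constant $G$-invariant $L^2$ functions on $Y \times Y$, or equivalently that $1$ is the only eigenvalue with constant eigenfunctions. Using the product structure, I would expand an invariant function into its Fourier-type / martingale decomposition over finite sets of coordinates $S \subseteq G/K$ and show that $G$-invariance forces the ``support'' $S$ to be $G$-invariant; since every nonempty finite $S$ is moved off itself by some $g \in G$ (because $G/K$ is infinite and the $G$-action on $G/K$ has the property that no finite subset is invariant when $[G:K]=\infty$), only the trivial ($S = \emptyset$, i.e. constant) component survives.

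The main obstacle is making the last combinatorial point fully rigorous: I must show that for the translation action of $G$ on $G/K$, no nonempty finite subset $S \subseteq G/K$ is $G$-invariant, and indeed that one can always translate any finite $S$ so that $gS \cap S = \emptyset$ (or at least $gS \neq S$), which is precisely where $[G:K] = \infty$ enters. A clean way to get this is to note that if some finite $S$ were invariant under all of $G$, then the (finite) union of the cosets in $S$ would be a $G$-invariant finite union of left cosets, forcing $G$ to be covered by finitely many cosets and hence $[G:K] < \infty$, a contradiction; pushing this to genuine weak mixing requires the orbit-disjointness version, which I would obtain by an exhaustion/pigeonhole argument on $G/K$. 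I would then assemble the spectral computation: $|\langle U_g f, f\rangle|$ averages to $0$ along a suitable sequence $g_n$ for every mean-zero $f$ supported on finitely many coordinates, and density of such $f$ in $L^2_0(Y, \nu)$ finishes the proof.
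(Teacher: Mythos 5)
Your part (1) is correct and is essentially the paper's own argument: check invariance on cylinder sets, using that the $g$-action permutes coordinates while applying the $\mu$-preserving maps $\alpha(\delta(\cdot,\cdot))$ in each coordinate, then conclude by uniqueness of the product measure. Your architecture for part (2) — weak mixing via ergodicity of the diagonal action, proved by approximating invariant functions by functions depending on finitely many coordinates and translating their supports off themselves — is also a viable route, and it genuinely differs from the paper, which instead verifies the Kerr--Li finite-collection criterion directly on rectangles.

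However, there is a real gap at exactly the step you flag as ``the main obstacle.'' What your argument needs is the separation statement: for every finite $S\subseteq G/K$ there exists $g\in G$ with $gS\cap S=\emptyset$. The weaker statement you do prove cleanly --- that no nonempty finite $S\subseteq G/K$ is $G$-invariant --- is automatic here (the $G$-action on $G/K$ is transitive, so there are no proper nonempty invariant subsets at all) and is not sufficient: knowing only $gS\neq S$ gives no independence between a cylinder function and its translate, so the martingale/correlation computation does not close. Moreover, the separation statement is not a pigeonhole or exhaustion fact: the set of ``bad'' $g$ is $\bigcup_{sK,\,tK\in S} tKs^{-1}$, a finite union of cosets of the conjugates $tKt^{-1}$, each of infinite index, and the assertion that such a union cannot equal $G$ is B.~H.~Neumann's covering lemma (proved by an induction on the number of distinct subgroups involved), not something a greedy argument on $G/K$ yields. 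This is precisely what the paper isolates as Proposition~\ref{finite union implies finite index}, quoting Drimbe's Lemma 2.7. Once you replace your ``exhaustion/pigeonhole'' step by an appeal to that proposition (or to Neumann's lemma), your spectral argument is complete; note that the paper's proof, given the same separation fact, is shorter, since disjointness of coordinate supports makes rectangles exactly independent and the Kerr--Li criterion then applies with no Fourier/martingale decomposition at all.
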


\begin{proof}
To prove (1), take any standard rectangle $A=\prod_{g\in F}A_{gK}$, where $A_{gK}\subseteq X$ is a measurable set. One can check that $gA=\prod_{gs\in gF}\alpha(\delta(g, sK))A_{sK}$. Hence, $\nu(A)=\nu(gA)=\prod_{g\in F}\mu(A_{gK})$. Then apply \cite[Proposition 2.30]{kl} to finish the proof.

To prove (2), take any finitely many standard rectangles $A_i=\prod_{g\in F_i}A_{gK}$ and $B_i=\prod_{g\in F'_i}B_{gK}$, it suffices to show that for any $\epsilon>0$, there exists some $g\in G$, such that $|\nu(A_i\cap gB_j)-\nu(A_i)\nu(B_j)|<\epsilon$ for all $i, j$. Clearly, one just need to show that there exists some $g\in G$ such that $F_i K\cap g F'_j K=\emptyset$, i.e. $g\not\in \cup_{i, j} F_i K F^{'-1}_j$, which is possible by Proposition \ref{finite union implies finite index}.
\end{proof}

\subsection*{Relative ends}
For a finitely generated group $G$, a classical notion of ends was developed by Hopf \cite{hopf} and Freudenthal \cite{freu}. Given a subgroup $K$ of $G$, one may ask whether a relative notion of ends could be developed. Indeed, there are already three different definitions of relative ends for a pair of groups $K<G$. One is due to Houghton \cite{hou} and Scott \cite{sco}, which is denoted by $e(G, K)$; another one is due to Kropholler and Roller \cite{kr}, which is denoted by $\widetilde{e}(G, K)$ and the last one is due to Andrade and Fanti \cite{af}. In this note, it suffices to introduce the first two definitions for use. For more discussion of these notions, see \cite[Section 14.5]{gtm243}. 

Let $S$ be any set, $G$ be a group and $K$ be a subgroup. Let $\mathfrak{P}S$ and $\mathfrak{F}S$ denote the power set of $S$ and the set of finite subsets of $S$ respectively. Let $\mathfrak{F}_{K}G$ denote the set of $K$-finite subset of $G$, i.e. $\mathfrak{F}_{K}G:=\{A\subseteq G~|~A \subseteq FK~\mbox{for some finite subset $F$ of $G$}\}$.  
Both $\mathfrak{P}G$, $\mathfrak{F}G$ and $\mathfrak{F}_{K}G$ admit the action of $G$ by left multiplication, and can be regarded as left $G$-modules over the field of two elements, which is denoted by $\mathbb{Z}_2$. Then, one defines the algebraic end invariant as 
$$\widetilde{e}(G, K):=\dim_{\mathbb{Z}_2}(\mathfrak{P}G/\mathfrak{F}_{K}G)^G.$$
Note that Geoghegan gave a topological description of $\widetilde{e}(G, K)$, which is called the number of filtered ends of $(G, K)$ in  \cite[Section 14.5]{gtm243} and \cite[Section 5]{cm}.

The geometric end invariant is defined as 
$e(G, K):=\dim_{\mathbb{Z}_2}(\mathfrak{P}(G/K )/\mathfrak{F}(G/K))^G.$
Here, $G/K$ denote the quotient of $G$ by the right action of $K$.

We collect together the properties of end invariants defined above that we may need later.

\begin{proposition}\label{property of relative ends}
Let $K\leq G$ be groups. Then the following hold.\\
(1) $e(G, K)=0$ if and only if $[G: K]<\infty$ if and only if $\widetilde{e}(G, K)=0$.\\
(2) If $K$ has infinite index in $G$, then $\widetilde{e}(G, K)=1+dim_{\mathbb{Z}_2}H^1(G, \mathfrak{F}_{K}G)$.\\
(3) $e(G, K)\leq \widetilde{e}(G, K)$ and $\widetilde{e}(G, K)=1$ implies $e(G, K)=1$.\\
(4) $e(G, K)=e(X)$, where $X$ is the left coset graph of $G$ with respect to $K$.\\
(5) Let $G$ and $K$ be finitely generated and suppose that $K$ has infinite index in $Comm_G(K)$. Then $\widetilde{e}(G, K)$ is either 1, 2 or infinite.\\
(6) If $K$ has infinite index in its normalizer in $G$. Then $e(G, K)$ is either 1, 2 or infinite.\\
(7) Let $K'\leq G$. If $K$ and $K'$ are commensurable, i.e. both $[K: K\cap K']<\infty$ and $[K': K\cap K']<\infty$, then $\widetilde{e}(G, K)=\widetilde{e}(G, K')$. In particular, if $K$ is finite, then $\widetilde{e}(G, K)=\widetilde{e}(G, 1)=e(G)$.\\
(8) If $K$ is finitely generated and normal in $G$. Then $\widetilde{e}(G, K)=e(G/K)$.
\end{proposition}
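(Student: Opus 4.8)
The plan is to treat the eight items as a mixture of short self-contained module computations---for the statements that unwind directly from the definitions of the two end invariants---and appeals to the structural theorems recorded in the references. I would organize the write-up by isolating the purely algebraic facts first and then reducing the geometric dichotomies to known results, matching hypotheses carefully.

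For (1), the inequality in (3), and for (7) I expect only bookkeeping with the $\mathbb{Z}_2$-modules of subsets. For (1): the only $G$-invariant subsets of $G$ (resp.\ $G/K$) under left translation are $\emptyset$ and the whole set, so the relevant invariant subspace is nonzero precisely when $G$ (resp.\ $G/K$) fails to be $K$-finite (resp.\ finite), which is exactly $[G:K]=\infty$; this handles both equivalences at once. For (7) I would check that commensurability of $K$ and $K'$ forces $\mathfrak{F}_K G=\mathfrak{F}_{K'}G$ as submodules of $\mathfrak{P}G$: writing $K=E(K\cap K')\subseteq EK'$ for a finite transversal $E$ gives $FK\subseteq (FE)K'$, and symmetrically, so the two notions of $K$-finiteness coincide and the quotients agree verbatim; the special case $K$ finite follows since $K$ is then commensurable with the trivial group and $\mathfrak{F}_{1}G=\mathfrak{F}G$. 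For the inequality in (3) I would use the pullback $A\mapsto \pi^{-1}(A)$ along $\pi\colon G\to G/K$, which embeds $\mathfrak{P}(G/K)$ into $\mathfrak{P}G$, carries $\mathfrak{F}(G/K)$ into $\mathfrak{F}_K G$, and satisfies $\pi^{-1}(A)\in\mathfrak{F}_K G\Rightarrow A\in\mathfrak{F}(G/K)$; hence it descends to a $G$-equivariant injection on quotients, and passing to $G$-invariants (a left-exact operation) preserves injectivity, giving $e(G,K)\le\widetilde{e}(G,K)$. The implication $\widetilde{e}=1\Rightarrow e=1$ then drops out of this inequality together with (1).

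For (2) I would run the long exact cohomology sequence attached to the short exact sequence $0\to\mathfrak{F}_K G\to\mathfrak{P}G\to\mathfrak{P}G/\mathfrak{F}_K G\to 0$. The key inputs are the identifications $(\mathfrak{P}G)^G\cong\mathbb{Z}_2$ and $(\mathfrak{F}_K G)^G=0$ (using $[G:K]=\infty$), together with the vanishing $H^1(G,\mathfrak{P}G)=0$; the last holds because $\mathfrak{P}G=\prod_{g\in G}\mathbb{Z}_2$ is the coinduced module $\mathrm{Coind}_{1}^{G}\mathbb{Z}_2$, so Shapiro's lemma gives $H^1(G,\mathfrak{P}G)\cong H^1(1,\mathbb{Z}_2)=0$. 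The sequence then collapses to $0\to\mathbb{Z}_2\to(\mathfrak{P}G/\mathfrak{F}_K G)^G\to H^1(G,\mathfrak{F}_K G)\to 0$, which is exactly the asserted formula.

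The remaining items are the geometric/structural ones, and I would deduce them from the literature rather than reprove them. Item (4) is the algebraic-versus-topological description of the ends of the locally finite connected coset graph, which I would cite from \cite{gtm243} (or \cite{cm}), identifying $G$-invariant classes in $\mathfrak{P}(G/K)/\mathfrak{F}(G/K)$ with almost-invariant subsets and hence with graph ends. Items (5) and (6) are Hopf--Stallings-type trichotomies: I would invoke the structure theory for commensurated subgroups in \cite{cm} for (5) and the corresponding classical statement under the normalizer condition for (6). For (8) I would first note that normality makes the left $G$-action on $G/K$ factor through the regular action of $G/K$, so $(\mathfrak{P}(G/K)/\mathfrak{F}(G/K))^G=(\mathfrak{P}(G/K)/\mathfrak{F}(G/K))^{G/K}$ and hence $e(G,K)=e(G/K)$ directly from the definitions; the remaining equality $\widetilde{e}(G,K)=e(G,K)$ for finitely generated normal $K$ I would take from \cite{kr}. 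The main obstacle is precisely this last circle of facts (4)--(6) and (8): they are not formal consequences of the definitions but rest on the finite-generation hypotheses and on Stallings-type structure theorems, so the real content lies in correctly matching the hypotheses of the cited results to our setting rather than in any computation carried out here.
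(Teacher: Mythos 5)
Your proposal is correct in substance but more self-contained than the paper's proof, which is almost entirely a list of citations: the paper disposes of (1) and (7) as clear from the definitions, and cites Kropholler--Roller \cite{kr} for (2), (3), (5), (8), Scott \cite{sco} for (4), and Geoghegan \cite{gtm243} for (6). Your direct arguments for (2) and (3) are exactly the standard proofs behind the cited lemmas of \cite{kr}: the long exact cohomology sequence for $0\to\mathfrak{F}_KG\to\mathfrak{P}G\to\mathfrak{P}G/\mathfrak{F}_KG\to 0$, combined with $(\mathfrak{P}G)^G\cong\mathbb{Z}_2$, $(\mathfrak{F}_KG)^G=0$ for infinite index, and the Shapiro-lemma vanishing $H^1(G,\mathfrak{P}G)=0$, is how Lemma 1.2 of \cite{kr} is proved; the $G$-equivariant embedding $\pi^{-1}\colon \mathfrak{P}(G/K)/\mathfrak{F}(G/K)\hookrightarrow \mathfrak{P}G/\mathfrak{F}_KG$ followed by the left-exactness of taking invariants is how Lemma 2.5(i) is proved. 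What your route buys is independence from \cite{kr} on the algebraic items; what the paper's route buys is brevity. Your handling of (7), of the special case of a finite $K$, and of the deduction $\widetilde{e}(G,K)=1\Rightarrow e(G,K)=1$ from the inequality plus (1) coincides with the paper's.

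Two caveats. First, in (1) the justification via ``the only $G$-invariant subsets are $\emptyset$ and the whole set'' is misleading: elements of $(\mathfrak{P}G/\mathfrak{F}_KG)^G$ are classes of \emph{almost} invariant sets, not of invariant sets (otherwise $\widetilde{e}$ could never exceed $1$). The correct two lines are: if $[G:K]<\infty$ then $\mathfrak{F}_KG=\mathfrak{P}G$ and the quotient vanishes; if $[G:K]=\infty$ then the class of $G$ itself is a nonzero invariant element. Your conclusion is right, so this is only an expository repair. Second, and more substantively, your citation for (5) does not match its hypothesis: the coset-graph results of \cite{cm} require $K$ to be commensurated, i.e. $Comm_G(K)=G$, whereas (5) assumes only $[Comm_G(K):K]=\infty$; when $Comm_G(K)$ is a proper subgroup, the coset graph of $K$ in $G$ need not be locally finite and that machinery is unavailable, so your route proves only the commensurated special case. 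The statement as given is Theorem 1.3 of \cite{kr}, which is what the paper cites; you should either cite that theorem or strengthen the hypothesis you claim to prove.
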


\begin{proof}
(1) is clear from the definition. (2) is Lemma 1.2 in \cite{kr}. (3) is Lemma 2.5 (i) in \cite{kr} and by (1). (4) is Lemma (ii) in \cite{sco}. (5) is Theorem 1.3 in \cite{kr}. (6) is Theorem 13.5.21 in \cite{gtm243}. (7) is clear since if $K$ and $K'$ are commensurable, then $\mathcal{F}_K(G)=\mathcal{F}_{K'}(G)$. (8) is Lemma 2.4 (iv) in \cite{kr}.
\end{proof}

\section{Geometry of the coset graph}\label{section on geometry of coset graph}
For the proof of Theorem \ref{main thm 1}, we would follow the approach in \cite{cohen} (one can also follow \cite{cj} but under further assumptions on $G/K$). However, we would work with the coset graph of $G$ with respect to $K$, which was defined on page 124 of \cite{cm}, rather than the Cayley graph of $G$ as in \cite{cj, cohen} when dealing with full shifts. Let us recall the definition of this coset graph below.

Suppose $G$ is a group with finite generating set $S$ and $K$ is a subgroup of $G$. Let $\Lambda(S, K, G)$ be the (left) coset graph with vertices the left cosets $gK$ of $G$ and a directed edge (labeled $s$) from $gK$ to $fK$ ($gK\neq fK$) if for some $s\in S$, $gKs\cap fK\neq \emptyset$. One can define a metric on $\Lambda(S, K, G)$ using the edge path metric, i.e. $d(g_1K, g_2K)$ is defined as the minimal length of paths connecting $g_1K$ and $g_2K$. Note that $G$ acts (on the left) transitively on the vertices of $\Lambda(S, K, G)$  by isometries.

We also define the word norm $|gK|:=d(gK, K)$, where the neutral element of $\Lambda(S, K, G)$, $1_{\Lambda(S, K, G)}$, is just the coset $K$.

For any $T\subseteq \Lambda(S, K, G)$, let $\mathcal{N}_LT:=\{gK :~\inf\{d(gK, s):~s\in T\}\leq L\}$ denote the $L$-neighborhood of $T$. Let $B(r, gK)$ denote the closed $r$-ball around $gK\in \Lambda(S, K, G)$, i.e. $B(r, gK):=\mathcal{N}_{r}\{gK\}$.

The crucial properties of this coset graph we use are the following.

\begin{proposition}\label{property of 2nd coset graph}
(1) Let $G$ be a group with finite generating set $S$. If $K$ is commensurated in $G$, then $\Lambda(S, K, G)$ is locally finite and connected.\\
(2) Let $G$ be a group with finite generating set $S$, $K$ be a finitely generated commensurated subgroup of $G$ with generating set a subset of $S$. Then the number of ends of $\Lambda(S, K, G)$ is equal to $\widetilde{e}(G, K)$.
\end{proposition}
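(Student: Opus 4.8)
The plan is to prove the two assertions separately, handling (1) directly from commensuration and reducing (2) to an identification of almost invariant sets. For (1), local finiteness follows because the neighbours of a vertex $gK$ are the cosets contained in $gKsK$ or $gKs^{-1}K$ for $s\in S$; since $K$ is commensurated, Proposition~\ref{property of commensurators} gives $Ks\subseteq F_sK$ with $F_s$ finite, so $KsK\subseteq F_sK$ is a finite union of left cosets and $gK$ has finitely many neighbours. For connectedness, writing $g=t_1\cdots t_n$ with $t_i\in S\cup S^{-1}$ and $g_i=t_1\cdots t_i$, the element $g_{i+1}=g_it_{i+1}$ lies in $g_iKt_{i+1}\cap g_{i+1}K$ (or $g_{i+1}K=g_iK$), so consecutive cosets $g_iK,g_{i+1}K$ are joined by an edge and $K=g_0K,\dots,g_nK=gK$ is a path.

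For (2), by (1) the graph $\Lambda:=\Lambda(S,K,G)$ is connected and locally finite, so its number of ends equals $\dim_{\mathbb{Z}_2}(\mathcal{B}\Lambda/\mathfrak{F}V)$, where $V=G/K$, $\mathfrak{F}V$ is the space of finite subsets and $\mathcal{B}\Lambda$ is the space of subsets of $V$ with finite edge-boundary; this is the standard combinatorial description of the ends of a graph. I would then aim to build a $\mathbb{Z}_2$-linear isomorphism $\mathcal{B}\Lambda/\mathfrak{F}V\cong(\mathfrak{P}G/\mathfrak{F}_KG)^G$. To each $B\subseteq V$ attach its preimage $\widetilde{B}=\pi^{-1}(B)\subseteq G$ under $\pi\colon G\to G/K$, a right-$K$-invariant set. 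Unwinding the edge relation of $\Lambda$, one checks that $B$ has finite edge-boundary precisely when, for every $s\in S\cup S^{-1}$, all but finitely many cosets $gK$ agree (as to membership in $B$) with every coset inside $gKsK$, and that this says exactly $\widetilde{B}s\,\triangle\,\widetilde{B}\in\mathfrak{F}_KG$.

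Commensuration then does the bookkeeping. Since $Kg$ is $K$-finite (Proposition~\ref{property of commensurators}), $\mathfrak{F}_KG$ is stable under right translation by $G$, so $\{g:\widetilde{B}g\,\triangle\,\widetilde{B}\in\mathfrak{F}_KG\}$ is a subgroup; containing $S\cup S^{-1}$ and $S$ generating $G$, the edge-boundary condition upgrades to $\widetilde{B}g\,\triangle\,\widetilde{B}\in\mathfrak{F}_KG$ for all $g\in G$. Thus $B\mapsto[\widetilde{B}]$ realises $\mathcal{B}\Lambda/\mathfrak{F}V$ as the space of classes in $\mathfrak{P}G/\mathfrak{F}_KG$ that are invariant under the right $G$-action and admit a right-$K$-invariant representative. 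Commensuration also forces the left- and right-coset notions of $K$-finiteness to coincide (a single left coset meets only finitely many right cosets, and conversely), so the inversion $A\mapsto A^{-1}$ carries this right-handed invariant space isomorphically onto the left-handed space $(\mathfrak{P}G/\mathfrak{F}_KG)^G$ of the definition, interchanging right- and left-$K$-invariance.

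The hard part, which I expect to be the real content, is to show that every invariant class already has a $K$-invariant representative, so that the subspace produced above is all of $(\mathfrak{P}G/\mathfrak{F}_KG)^G$; this is precisely the step separating $\widetilde{e}(G,K)$ from the coarser invariant $e(G,K)$, and it is where finite generation of $K$ is indispensable. Given an almost invariant $A$, the plan is to replace it by its saturation $AK$ and prove $AK\,\triangle\,A\in\mathfrak{F}_KG$: if $A$ is partial on a coset $fK$ (meets it but does not fill it), then $A$ is not invariant there under some generator of $K$, and since $Ak\,\triangle\,A\in\mathfrak{F}_KG$ for each of the finitely many generators $k$, only finitely many cosets can be partial. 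The assumption that a generating set of $K$ lies in $S$ plays a supporting role, making the internal $K$-directions contribute only loops to $\Lambda$ and so ensuring that collapsing $G$ onto $G/K$ does not disturb the matching between combinatorial finiteness of edge-boundaries and the module-theoretic $K$-finiteness defining $\widetilde{e}(G,K)$.
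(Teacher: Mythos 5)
Your proposal is correct, and it takes a genuinely different route from the paper: the paper disposes of the hard parts by citation, quoting local finiteness in (1) from Theorem 4.4 of \cite{cm} and all of (2) from Theorem 5.4 of \cite{cm}, and only argues connectedness directly (via Corollary 2.4 of \cite{cm}, where your path through the partial products $t_1\cdots t_i$ is self-contained and even simpler). Your treatment of (2) amounts to reproving the cited theorem, and its skeleton holds up under scrutiny: the equivalence of finite edge-boundary with $\widetilde{B}s\,\triangle\,\widetilde{B}\in\mathfrak{F}_KG$ is correct, since the cosets meeting $\widetilde{B}s\setminus\widetilde{B}$ are exactly the targets of boundary $s$-edges leaving $B$, while commensuration (via Proposition \ref{property of commensurators}, $f'Ks^{-1}K\subseteq f'FK$ for finite $F$) bounds the sources over each target; the right-stability of $\mathfrak{F}_KG$ used in your subgroup upgrade, and the identity $(\mathfrak{F}_KG)^{-1}=\mathfrak{F}_KG$ that legitimizes the inversion trick, are again exactly commensuration. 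The saturation step you single out as the real content is also correct and is indeed where finite generation of $K$ enters: any coset on which $C$ is partial must meet $\bigcup_{t\in T}(Ct\,\triangle\,C)$, which is $K$-finite, so only finitely many cosets are partial and $CK\,\triangle\,C\in\mathfrak{F}_KG$. As for what each approach buys: the paper's is short but imports \cite{cm} wholesale, whereas yours is self-contained and in fact slightly more general --- contrary to your closing sentence, which ascribes a supporting role to the hypothesis $T\subseteq S$, your argument never uses that the generating set of $K$ lies in $S$, only that $K$ is finitely generated, so you actually show that the number of ends of $\Lambda(S,K,G)$ equals $\widetilde{e}(G,K)$ for an arbitrary finite generating set $S$ of $G$.
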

\begin{proof}
(1) $\Lambda(S, K, G)$ is locally finite by Theorem 4.4 in \cite{cm}. From Corollary 2.4 in \cite{cm}, we know for all $g\in G$, there exist finitely many elements in $S$, say $s_1, \dots, s_n$, such that $Ks_1\cdots s_n\cap gK\neq \emptyset$. Take $k_1, k_2\in K$, such that $gk_2=k_1s_1\cdots s_n$. Then $K, k_1s_1K, k_1s_1s_2K, \dots, k_1s_1\cdots s_{n-1}K, gK$ is a path connecting $K$ and $gK$ via edges $s_1, s_2, \dots, s_n$. Hence, $\Lambda(S, K, G)$ is connected.

(2) is Theorem 5.4 in \cite{cm}.
\end{proof}

For the rest of the paper, we would assume $K\leq G$ is commensurated, both $G$ and $K$ are finitely generated and $\widetilde{e}(G, K)=1$. Note that $[G: K]=\infty$ now.

By Proposition \ref{property of 2nd coset graph} (2), we know that for any $r$, $\Lambda(S, K, G)$ has exactly one 
unbounded connected component after removing a finite ball.

\begin{definition}\label{def 2.6}
For $r\in\mathbb{N}$, let\\
$N(r):=\sup\{|gK|: gK ~\mbox{does not belong to the unbounded component of}~ \Lambda(S, K, G)\setminus B(r, 1_{\Lambda(S, K, G)})\}$.
\end{definition}
Since $\Lambda(S, K, G)$ is locally finite, we have $N(r)<\infty$. 

\begin{lem}\label{lemma 2.2}
If $g\in G$, there exists a geodesic $\gamma: \{0, \dots, |gK |\}\to \Lambda(S, K, G)$ such that $\gamma(0)=1_{\Lambda(S, K, G)}$ and $\gamma(|gK|)=gK$.
\end{lem}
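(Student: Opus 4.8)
The plan is to exhibit a minimal-length edge-path from $K$ to $gK$ and then argue that minimality forces it to be a geodesic.

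First I would invoke connectedness: by Proposition \ref{property of 2nd coset graph}(1) the graph $\Lambda(S, K, G)$ is connected, so the set of lengths of edge-paths joining $1_{\Lambda(S, K, G)} = K$ to $gK$ is a nonempty subset of $\mathbb{N}$. By the well-ordering of $\mathbb{N}$ this set has a least element, which by the definition of the edge-path metric is precisely $|gK| = d(gK, K)$. Thus the infimum defining $d(gK, K)$ is attained, and there is a path $\gamma\colon \{0, \dots, |gK|\} \to \Lambda(S, K, G)$ with $\gamma(0) = K$, $\gamma(|gK|) = gK$, and $\gamma(i)$ adjacent to $\gamma(i+1)$ for each $i$.

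Next I would check that this minimal-length $\gamma$ is in fact a geodesic, i.e. $d(\gamma(i), \gamma(j)) = |i - j|$ for all indices $i, j$. One inequality is immediate: the restriction of $\gamma$ to the indices lying between $i$ and $j$ is an edge-path of length $|i - j|$ joining $\gamma(i)$ to $\gamma(j)$, so $d(\gamma(i), \gamma(j)) \leq |i - j|$. For the reverse inequality I would argue by contradiction: if some pair satisfied $d(\gamma(i), \gamma(j)) < |i - j|$, then splicing a strictly shorter $\gamma(i)$-to-$\gamma(j)$ path into $\gamma$ in place of the segment between indices $i$ and $j$ would yield an edge-path from $K$ to $gK$ of length strictly less than $|gK|$, contradicting the minimality of $\gamma$.

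This is essentially the standard fact that in a connected graph equipped with the edge-path metric every distance is realized by a geodesic, so I do not anticipate a genuine obstacle. The only point requiring care is the cut-and-paste step, where one must confirm that replacing a subsegment of $\gamma$ by a shorter connecting path again produces a legitimate edge-path; this is clear since a concatenation of edge-paths is again an edge-path. Note that local finiteness, although available from Proposition \ref{property of 2nd coset graph}(1), is not needed for this argument, as connectedness alone suffices to realize the distance.
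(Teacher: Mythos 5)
Your proof is correct and is essentially the argument the paper intends (the paper defers to Lemma 2.2 of \cite{cohen}, whose proof is exactly this: realize $|gK|=d(K,gK)$ by a minimal-length path, then use minimality and the splicing argument to conclude it is a geodesic). Your observation that connectedness of $\Lambda(S,K,G)$ from Proposition \ref{property of 2nd coset graph}(1) suffices, with local finiteness not needed, is also accurate.
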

\begin{proof}
Similar to the proof of Lemma 2.2 in \cite{cohen}.
\end{proof}

\begin{lem}\label{lemma 2.3}
There exists a geodesic $\gamma: \mathbb{Z}\to \Lambda(S, K, G)$ with $\gamma(0)=1_{\Lambda(S, K, G)}$.
\end{lem}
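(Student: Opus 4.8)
The goal is to construct a bi-infinite geodesic through the basepoint $1_{\Lambda(S,K,G)}$ in the locally finite, connected, one-ended coset graph $\Lambda(S,K,G)$. This is a classical ``geodesic ray/line'' construction, and the natural tool is a König's-lemma / compactness argument exploiting local finiteness together with the one-endedness guaranteed by Proposition \ref{property of 2nd coset graph}(2) and recorded in Definition \ref{def 2.6}.

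The plan is to first produce, for each $n$, a geodesic segment $\gamma_n : \{-a_n, \dots, b_n\} \to \Lambda(S,K,G)$ passing through $1_{\Lambda(S,K,G)}$ whose two endpoints both lie far out in the graph, with $a_n, b_n \to \infty$, and then extract a bi-infinite limit by a diagonal argument. To do this I would exploit one-endedness as follows: since $\Lambda$ has exactly one end, for every $r$ the complement of the ball $B(r, 1_{\Lambda(S,K,G)})$ has exactly one unbounded component, and the quantity $N(r)$ of Definition \ref{def 2.6} is finite. Pick a vertex $v_n$ with $|v_n| > N(n)$, so that $v_n$ lies in the unbounded component of $\Lambda \setminus B(n, 1_\Lambda)$, and apply Lemma \ref{lemma 2.2} to get a geodesic $[1_\Lambda, v_n]$ of length $|v_n|$. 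The one-ended condition lets me find, on the ``other side'' of the basepoint, a second vertex $w_n$ in the unbounded component that is also far away, and join the two geodesics $[w_n, 1_\Lambda]$ and $[1_\Lambda, v_n]$; the key point is that because there is a single unbounded component, I can arrange that a geodesic from $w_n$ to $v_n$ actually passes within bounded distance of the basepoint, or more cleanly, build a path through $1_\Lambda$ whose two halves are each geodesic and which is itself geodesic near the origin.

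Then I would pass to a limit. For each fixed radius $m$, the restriction of $\gamma_n$ to $\{-m, \dots, m\}$ is a path in the finite set $B(m, 1_\Lambda)$ (finite by local finiteness, Proposition \ref{property of 2nd coset graph}(1)), so there are only finitely many possibilities; by a diagonal extraction over increasing $m$ I obtain a single map $\gamma : \mathbb{Z} \to \Lambda(S,K,G)$ with $\gamma(0) = 1_\Lambda$ that agrees with infinitely many of the $\gamma_n$ on each finite window. Since geodesics are characterized by the local condition $d(\gamma(i), \gamma(j)) = |i-j|$, and each such condition only involves finitely many coordinates, this condition is inherited by the limit $\gamma$, making it a bi-infinite geodesic.

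The main obstacle, and the step requiring genuine care rather than routine bookkeeping, is ensuring that the two-sided geodesic segments $\gamma_n$ remain \emph{geodesic across the basepoint}, i.e. that concatenating a geodesic coming in to $1_\Lambda$ with one going out stays distance-minimizing; simply gluing two rays does not automatically give a geodesic line. This is exactly where one-endedness (through $N(r) < \infty$) does the work: because removing a large ball leaves only one unbounded piece, a long geodesic terminating at a faraway point $v_n$ must, after entering the unbounded component, have travelled ``monotonically outward,'' and this forces the needed minimality when the two halves are combined. I would expect the cleanest route to mirror the argument of Lemma \ref{lemma 2.3}'s analogue in \cite{cohen}, adapting it verbatim to the coset graph since every ingredient used there (local finiteness, connectedness, finiteness of $N(r)$, and Lemma \ref{lemma 2.2}) is now available in $\Lambda(S,K,G)$.
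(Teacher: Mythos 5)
Your compactness step (diagonal extraction over finite balls, using local finiteness) is fine and is exactly how the paper finishes, via Arzel\`a--Ascoli. The genuine gap is upstream, in how you produce the two-sided geodesic segments $\gamma_n$ in the first place. You correctly flag that concatenating a geodesic $[w_n,1_{\Lambda(S,K,G)}]$ with a geodesic $[1_{\Lambda(S,K,G)},v_n]$ need not be distance-minimizing, but your proposed repair --- that one-endedness, through $N(r)<\infty$, forces minimality across the basepoint --- does not work; if anything it points the wrong way. One-endedness puts $v_n$ and $w_n$ in the \emph{same} unbounded component of the complement of a large ball, so there is no intrinsic ``other side'' of the basepoint, and nothing prevents $d(w_nK,v_nK)$ from being tiny while both norms are huge (in the Cayley graph of $\mathbb{Z}^2$ take $w_n=(n,0)$, $v_n=(n,1)$: the glued path through the origin has length $2n+1$ but $d(w_n,v_n)=1$). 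It is in \emph{multi}-ended graphs, where the two points can be placed in different unbounded components, that paths between them are forced through the ball; in the one-ended case that mechanism is absent. Worse, the strategy is unsalvageable from the ingredients you list: the half-line graph $\mathbb{Z}_{\geq 0}$ based at $0$ is locally finite, connected, one-ended, has $N(r)=r<\infty$, and every vertex is joined to the basepoint by a geodesic, yet it contains no bi-infinite geodesic at all. Hence no argument using only local finiteness, connectedness, one-endedness and Lemma \ref{lemma 2.2} can prove the lemma.

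The missing ingredient is homogeneity: $G$ acts transitively on the vertices of $\Lambda(S,K,G)$ by isometries, and the paper uses this in place of any gluing. Since $\widetilde{e}(G,K)\neq 0$ gives $[G:K]=\infty$ (Proposition \ref{property of relative ends}(1)) and the graph is connected and locally finite, the word norm attains every natural number; for each $r$ choose $g_{2r}K$ with $|g_{2r}K|=2r$ and, by Lemma \ref{lemma 2.2}, a geodesic $\gamma_r\colon\{0,\dots,2r\}\to\Lambda(S,K,G)$ from the basepoint to $g_{2r}K$, written $\gamma_r(i)=g_iK$. Now \emph{recenter at the midpoint}: set $\widetilde{\gamma}_r(n):=g_r^{-1}g_{n+r}K$ on $\{-r,\dots,r\}$. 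Each $\widetilde{\gamma}_r$ is the image of a restriction of $\gamma_r$ under the isometry $g_r^{-1}$, hence is automatically a geodesic, and it passes through $1_{\Lambda(S,K,G)}$ at time $0$; no minimality-across-the-basepoint issue ever arises. Your diagonal argument then applies verbatim to $(\widetilde{\gamma}_r)_r$ and yields the bi-infinite geodesic. (Note also that beyond guaranteeing $[G:K]=\infty$, one-endedness plays no role in this lemma; it is needed later, in Lemma \ref{lemma 2.4} and Proposition \ref{prop 4.4}. This is the same recentering device as in Cohen's Lemma 2.3, so your closing instinct to mirror that argument was right --- but the ingredient doing the work there is vertex-transitivity, which is exactly the one omitted from your list.)
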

\begin{proof}
Recall that $\widetilde{e}(G, K)\neq 0$ implies $[G: K]=\infty$ by Proposition \ref{property of relative ends} (1), and since $\Lambda(S, K, G)$ is locally finite by Proposition \ref{property of 2nd coset graph} (1), the word norm on $\Lambda(S, K, G)$ achieves every natural number. For each $r\in \mathbb{N}$, choose some $g_{2r} K\in \Lambda(S, K, G)$ such that $|g_{2r}K|=2r$. By Lemma \ref{lemma 2.2}, there is a geodesic $\gamma_r: \{0, \dots, 2r\}\to \Lambda(S, K, G)$ such that $\gamma_r(0)=1_{\Lambda(S, K, G)}$ and $\gamma_r(2r)=g_{2r}K$. Write $\gamma_r(i)=g_i K$. Let $\widetilde{\gamma}_r: \{-r, \dots, r\}\to \Lambda(S, K, G)$ be defined by $\widetilde{\gamma}_r(n):=g_r^{-1}g_{n+r}K$, so that $\widetilde{\gamma}_r$ is a geodesic and $\widetilde{\gamma}_r(0)=1_{\Lambda(S, K, G)}$. By Arzela-Ascoli theorem, the sequence $(\widetilde{\gamma}_r)_{r\in \mathbb{N}}$ subconverges pointwise to a geodesic $\gamma: \mathbb{Z}\to \Lambda(S, K, G)$. 
\end{proof}

Let $\mathbb{Z}_{\geq 0}=\{n\in \mathbb{Z}: n\geq 0\}$ and  $\mathbb{Z}_{\leq 0}=\{n\in \mathbb{Z}: n\leq 0\}$. Recall that $\mathcal{N}_{L}(T)$ denotes the $L$-neighborhood of $T\subseteq \Lambda(S, K, G)$.

\begin{lem}\label{lemma 2.4}
If $\gamma: \mathbb{Z}\to \Lambda(S, K, G)$ is a geodesic with $\gamma(0)=1_{\Lambda(S, K, G)}$, then $\mathcal{N}_L(\gamma(\mathbb{Z}_{\geq 0}))\cap \mathcal{N}_L(\gamma(\mathbb{Z}_{\leq 0}))\subseteq B(3L, 1_{\Lambda(S, K, G)})$.
\end{lem}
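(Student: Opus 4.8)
The plan is to reduce everything to the triangle inequality together with the one property of a geodesic we actually need, namely that $d(\gamma(i),\gamma(j))=|i-j|$ for all $i,j\in\mathbb{Z}$. So I would start from an arbitrary vertex $v=gK$ in the intersection $\mathcal{N}_L(\gamma(\mathbb{Z}_{\geq 0}))\cap \mathcal{N}_L(\gamma(\mathbb{Z}_{\leq 0}))$. By the definition of these two $L$-neighborhoods, and since distances in the locally finite graph $\Lambda(S,K,G)$ are nonnegative integers (so the relevant infima are attained), there exist integers $m\geq 0$ and $n\leq 0$ with $d(v,\gamma(m))\leq L$ and $d(v,\gamma(n))\leq L$. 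The whole point is that $m$ comes from the forward ray and $n$ from the backward ray, so they have opposite signs.

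The first step is to combine the two estimates across $v$: the triangle inequality gives $d(\gamma(m),\gamma(n))\leq d(\gamma(m),v)+d(v,\gamma(n))\leq 2L$, and since $\gamma$ is a geodesic and $m\geq 0\geq n$ the left-hand side equals $|m-n|=m+(-n)$. Thus $m+(-n)\leq 2L$, and because both summands are nonnegative this forces $m\leq 2L$ (and symmetrically $-n\leq 2L$). The second step is a single further application of the triangle inequality anchored at the base point $1_{\Lambda(S,K,G)}=\gamma(0)$: we get $|v|=d(v,\gamma(0))\leq d(v,\gamma(m))+d(\gamma(m),\gamma(0))\leq L+m\leq L+2L=3L$, using once more that $d(\gamma(m),\gamma(0))=m$ by the geodesic property. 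Hence $v\in B(3L,1_{\Lambda(S,K,G)})$, and since $v$ was arbitrary this is exactly the claimed inclusion.

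There is essentially no obstacle here; the argument is purely metric and uses neither the specific structure of the coset graph nor the relative-end hypothesis, only connectedness and local finiteness (to guarantee that a vertex really does have an integer distance to each ray that is realized at some $\gamma(m)$). The one thing worth flagging is that the constant $3L$ is not sharp: bounding $d(v,\gamma(0))$ from both sides, via $\gamma(m)$ and via $\gamma(n)$, yields $|v|\leq L+\min(m,-n)\leq 2L$, because $m+(-n)\leq 2L$ makes $\min(m,-n)\leq L$. I would nonetheless state and prove the bound $3L$ as written, since the slack is harmless for the later applications and the one-sided estimate above is cleaner.
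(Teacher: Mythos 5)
Your argument is correct, and it is essentially the same as the paper's: the paper simply defers to Lemma 2.4 of Cohen's paper, whose proof is exactly this triangle-inequality computation (pick $m\geq 0$, $n\leq 0$ realizing the two neighborhood conditions, use $d(\gamma(m),\gamma(n))=m-n\leq 2L$, then bound $|v|\leq L+m\leq 3L$). Your observation that the bound improves to $2L$ is also right, and as you say the slack in $3L$ is harmless for the later applications.
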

\begin{proof}
Similar to the proof of Lemma 2.4 in \cite{cohen}.
\end{proof}

\section{Cocycles over the coinduced actions}\label{section on cocycles over the coinduced actions}
Fix a continuous cocycle for the coinduced action $c: G\times X^{G/K}\to H$, where $H$ is a countable group. Here, $c$ is continuous means that for every $g\in G$, there is a finite subset $F\subseteq G$ (depending on $g$) such that $c(g, y)$ depends only on the restriction $y|_{FK}$ of $y$ to the finitely many cosets $FK$. That is, if $y|_{FK}=z|_{FK}$, then $c(g, y)=c(g, z)$.

\begin{definition}\label{def 3.1}
Let $S$ and $T$ be the symmetric generating set of $G$ and $K$ respectively, and we may assume $T\subseteq S$. Let $L\in\mathbb{N}$ be such that for any generator $s\in S$, the function $c(s, y)$ is determined by $y|_{B(L, 1_{\Lambda(S, K, G)})}$.
\end{definition}

\begin{definition}\label{def 3.2}
Fix a $K$-invariant point $x_0\in X$, and denote by $\overline{0}\in X^{G/K}$ to be the $G$-invariant point, i.e.  $\overline{0}_{gK}=x_0$ for all $g\in G$. Let $\Delta(\overline{0})\subseteq X^{G/K}$ consist of all $y\in X^{G/K}$ such that $y_{gK}=x_0$ for all but finitely many $g\in G$, and for $y\in \Delta(\overline{0})$, let $||y||$ denote $\sup\{|gK|:~ y_{gK}\neq x_0\}$.
\end{definition}

\begin{lem}\label{lem 3.3}
Let $\gamma:\{0,\ldots,n\}\to \Lambda(S, K, G)$ be a path, say, $\gamma(i)=g_iK$ for $i=0, \dots, n$, and take $y\in X^{G/K}$. Then $c(g_n^{-1},y)c(g_0^{-1},y)^{-1}$ is determined by $y|_{\mathcal{N}_L(\gamma\{0,\ldots,n\})}.$ That is, if 
$$y|_{\mathcal{N}_L(\gamma\{0,\ldots,n\})}=z|_{\mathcal{N}_L(\gamma\{0,\ldots,n\})},$$
then
$$c(g_n^{-1},y)c(g_0^{-1},y)^{-1}=c(g_n^{-1},z)c(g_0^{-1},z)^{-1}.$$
\end{lem}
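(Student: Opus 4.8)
The plan is to argue by induction on the length $n$ of the path, after rewriting the target as a telescoping product. Writing $\gamma(i)=g_iK$, the cocycle identity gives
$$c(g_n^{-1},y)c(g_0^{-1},y)^{-1}=\prod_{i=0}^{n-1}c(g_{i+1}^{-1},y)c(g_i^{-1},y)^{-1},$$
so it suffices to control each single-edge factor and to check that the union of the resulting dependence regions sits inside $\mathcal{N}_L(\gamma\{0,\dots,n\})$. For one edge, applying $c(st,x)=c(s,tx)c(t,x)$ with $s=g_{i+1}^{-1}g_i$ and $t=g_i^{-1}$ collapses the factor to $c(g_{i+1}^{-1},y)c(g_i^{-1},y)^{-1}=c(g_{i+1}^{-1}g_i,\,g_i^{-1}y)$, reducing the whole problem to understanding a single value of $c$ at the shifted configuration $g_i^{-1}y$.

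First I would unpack the adjacency of $\gamma(i)$ and $\gamma(i+1)$: an edge labelled $s\in S$ means $g_iKs\cap g_{i+1}K\neq\emptyset$, so there are $k,k'\in K$ with $g_{i+1}^{-1}g_i=k's^{-1}k^{-1}$. Expanding $c(k's^{-1}k^{-1},\,g_i^{-1}y)$ by the cocycle identity produces three factors, of types $c(k',\cdot)$, $c(s^{-1},\cdot)$ and $c(k^{-1},\cdot)$. The apparent difficulty is that $k,k'$ have no a priori bound on their word length in $T$, so naively the dependence region could be arbitrarily large. The crucial observation that resolves this is that $K$ fixes the base vertex $1_{\Lambda(S,K,G)}=K$: if $k^{-1}=t_1\cdots t_m$ with $t_j\in T\subseteq S$, then telescoping $c(k^{-1},w)=\prod_j c(t_j,\,t_{j+1}\cdots t_m w)$ shows each factor depends on $w$ only through $B(L,\,t_{j+1}\cdots t_m\cdot 1_{\Lambda(S,K,G)})=B(L,1_{\Lambda(S,K,G)})$, since every tail $t_{j+1}\cdots t_m$ lies in $K$ and hence acts trivially on the base vertex. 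Thus $c(k^{-1},\cdot)$ and $c(k',\cdot)$ are determined by their arguments on a \emph{fixed} ball about the base point, independently of the lengths of $k$ and $k'$; the middle factor $c(s^{-1},\cdot)=c(s,s^{-1}\cdot)^{-1}$ is handled by Definition \ref{def 3.1}.

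The last step is to translate these fixed dependence regions back to $y$. Using that $G$ acts on $\Lambda(S,K,G)$ by isometries and that $(g^{-1}y)|_T$ is determined by $y|_{gT}$ (the shift reads coordinate $gvK$ for coordinate $vK$, and $\alpha$ is a bijection on $X$), each factor's dependence on $g_i^{-1}y$ over a ball $B(L,vK)$ becomes a dependence on $y$ over $B(L,g_ivK)$. Tracking the relevant cosets — namely $vK$ equal to $K$, to $s^{-1}K$, and to $ksK$, each of which lies within distance $1$ of the base vertex — and noting that $g_iksK=g_{i+1}K$, one checks that after applying $g_i$ all of them land in $\mathcal{N}_L(\{g_iK,g_{i+1}K\})\subseteq\mathcal{N}_L(\gamma\{0,\dots,n\})$, provided $L\geq 1$ (which we may assume after enlarging $L$). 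Combining this single-edge estimate with the inductive hypothesis applied to the subpath $\gamma|_{\{0,\dots,n-1\}}$ closes the induction. I expect the main obstacle to be precisely this bookkeeping in the translation step — verifying that every ball produced by the three factors, once shifted by $g_i$, stays within the $L$-neighborhood of the edge — whereas the conceptual heart is the base-point-fixing property of $K$, which tames the otherwise unbounded word length of $k$ and $k'$.
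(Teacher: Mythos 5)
Your proposal is correct and follows essentially the same route as the paper: the paper likewise writes $g_{i+1}^{-1}g_i=x_i^{-1}s_i^{-1}y_i^{-1}$ with $x_i,y_i\in K$, expands $c(g_n^{-1},y)c(g_0^{-1},y)^{-1}$ into the corresponding three cocycle factors per edge, and tames the unbounded $K$-words exactly as you do --- by telescoping over the generators $T\subseteq S$ and using that elements of $K$ fix the base coset, so each factor reads only $B(L,1_{\Lambda(S,K,G)})$, which left-invariance of the metric then translates into $\mathcal{N}_L(\gamma\{0,\dots,n\})$. The only nitpick is that your telescoping product must be taken in descending order of $i$ (as in the paper's $\prod_{i=n-1}^{0}$) for the cancellation to work when $H$ is noncommutative.
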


\begin{proof}
By the definition of a path in $\Lambda(S, K, G)$, there exist $s_i, i=0, \dots, n$ in $S$ such that $g_{i+1}K\cap g_iKs_i\neq \emptyset$ for all $i=0, \dots, n-1$. Hence, there are $x_i, y_i$ in $K$ such that $g_{i+1}x_i^{-1}=g_iy_is_i$ for all $i=0, \dots, n-1$. Hence, $g_i=g_0\prod_{j=0}^{i-1}(y_js_jx_j)$ for all $i=1,\dots, n$. In particular, $g_n^{-1}=(\prod_{i=n-1}^0x_i^{-1}s_i^{-1}y_i^{-1})g_0^{-1}$,

\begin{eqnarray*}
&&c(g_n^{-1}, y)c(g_0^{-1}, y)^{-1}=\prod_{i=n-1}^0\eta_i^{(1)}(y)\eta_i^{(2)}(y)\eta_i^{(3)}(y).
\end{eqnarray*}
Here 
\begin{eqnarray*}
\eta_i^{(1)}(y)&:=&c(x_i^{-1}, s_i^{-1}y_i^{-1}\prod_{j=i-1}^0(x_j^{-1}s_j^{-1}y_j^{-1})g_0^{-1}y),\\
\eta_i^{(2)}(y)&:=&c(s_i^{-1}, y_i^{-1}\prod_{j=i-1}^0(x_j^{-1}s_j^{-1}y_j^{-1})g_0^{-1}y),\\
\eta_i^{(3)}(y)&:=&c(y_i^{-1}, \prod_{j=i-1}^0(x_j^{-1}s_j^{-1}y_j^{-1})g_0^{-1}y).
\end{eqnarray*}
Similarly, we can define $\eta_i^{(k)}(z)$. We claim that for each $i$ and $k$, $\eta_i^{(k)}(y)=\eta_i^{(k)}(z)$. This would finish the proof.

Fix any $i\in \{0, \dots, n-1\}$.

For $k=1$. Since $K$ is finitely generated, we can write $x_i^{-1}=t_1\cdots t_l$ for some $t_*$ in $T$. Then, 
\begin{eqnarray*}
\eta_i^{(1)}(y)=\prod_{m=1}^lc(t_m, t_{m+1}\cdots t_ls_i^{-1}y_i^{-1}\prod_{j=i-1}^0(x_j^{-1}s_j^{-1}y_j^{-1})g_0^{-1}y).
\end{eqnarray*}
If suffices to check that for each $m$,
\begin{eqnarray*}
&&c(t_m, t_{m+1}\cdots t_ls_i^{-1}y_i^{-1}\prod_{j=i-1}^0(x_j^{-1}s_j^{-1}y_j^{-1})g_0^{-1}y)\\
&=&c(t_m, t_{m+1}\cdots t_ls_i^{-1}y_i^{-1}\prod_{j=i-1}^0(x_j^{-1}s_j^{-1}y_j^{-1})g_0^{-1}z).
\end{eqnarray*}
For this, we just need to check 
$(t_{m+1}\cdots t_ls_i^{-1}y_i^{-1}\prod_{j=i-1}^0(x_j^{-1}s_j^{-1}y_j^{-1})g_0^{-1}y)|_{B(L, 1_{\Lambda(S, K, G)})}=(t_{m+1}\cdots t_ls_i^{-1}y_i^{-1}\prod_{j=i-1}^0(x_j^{-1}s_j^{-1}y_j^{-1})g_0^{-1}z)|_{B(L, 1_{\Lambda(S, K, G)})}$; 

Then, it suffices to check  

$(t_{m+1}\cdots t_ls_i^{-1}y_i^{-1}\prod_{j=i-1}^0(x_j^{-1}s_j^{-1}y_j^{-1})g_0^{-1})^{-1}B(L, 1_{\Lambda(S, K, G)})\subseteq \mathcal{N}_L(\gamma\{0,\ldots,n\})$.

To see this, take any $gK\in B(L, 1_{\Lambda(S, K, G)})$,
we have the following:

\begin{eqnarray*}
&&d((t_{m+1}\cdots t_ls_i^{-1}y_i^{-1}\prod_{j=i-1}^0(x_j^{-1}s_j^{-1}y_j^{-1})g_0^{-1})^{-1}gK, \gamma(i+1))\\
&=&d(g_0\prod_{j=0}^{i-1}(y_js_jx_j)y_is_it_l^{-1}\cdots t_{m+1}^{-1}gK, g_{i+1}K)\\
&=&d(g_iy_is_it_l^{-1}\cdots t_{m+1}^{-1}gK, g_{i+1}K)\\
&=&d(g_{i+1}x_i^{-1}t_l^{-1}\cdots t_{m+1}^{-1}gK, g_{i+1}K)\\
&=&d(x_i^{-1}t_l^{-1}\cdots t_{m+1}^{-1}gK, K)~\mbox{(since $d$ is left $G$-invariant)}\\
&=&d(gK, t_{m+1}\cdots t_jx_iK)\\
&=&d(gK, K)\leq L. ~\mbox{(since $t_*, x_i\in K$)}
\end{eqnarray*}
One can prove the case $k=3$ similarly.

For $k=2$, it suffices to check $(y_i^{-1}\prod_{j=i-1}^0(x_j^{-1}s_j^{-1}y_j^{-1})g_0^{-1})^{-1}B(L, 1_{\Lambda(S, K, G)})\subseteq \mathcal{N}_L(\gamma\{0,\ldots,n\})$.
The proof is similar to the above by using the fact that $y_i\in K$.
\end{proof}


\section{Proof of Theorem \ref{main thm 1}}\label{section on the proof of theorem 1}
We follow the proof in \cite[Section 4]{cohen} and do some mild modification.

Recall the notations $\overline{0}$, $\Delta(\overline{0})$ and $||y||$ from Definition \ref{def 3.2} and the notation $N(r)$ from Definition \ref{def 2.6}.

\begin{definition}\label{def 4.1}
Let $\phi(g):=c(g,\overline{0})$. For each $y\in\Delta(\overline{0})$, fix some $g_y\in G$ such that $\min\{|g_yK|,|g_y^{-1}K|\}>N(\|y\|+L)$, then set $b(y):=c(g_y,y)^{-1}\phi(g_y)$.
\end{definition}

Note that such a $g_y$ exists by Proposition \ref{property of commensurators} or Proposition \ref{finite union implies finite index}.

Observe that $\phi$ is a homomorphism by the cocycle identity, since $\overline{0}$ is a fixed point for the coinduced action. We will show that the restriction of $c$ to $G\times \Delta(\overline{0})$ is cohomologous to $\phi$ with transfer function $b$, and then we will show that $b$ extends continuously to all of $Y=X^{G/\Lambda}$. First, we see that the choice of $g_y$ in Definition \ref{def 4.1} is irrelevant.

\begin{lem}\label{lem 4.2}
If $\min\{|gK|, |g^{-1}K|\}>N(\|y\|+L)$, then $c(g,y)^{-1}\phi(g)=b(y)$.
\end{lem}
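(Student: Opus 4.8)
The plan is to show that the quantity $c(g,y)^{-1}\phi(g)$ is independent of the choice of $g$ among those satisfying $\min\{|gK|,|g^{-1}K|\}>N(\|y\|+L)$. Since $g_y$ is one such choice and $b(y)=c(g_y,y)^{-1}\phi(g_y)$ by Definition \ref{def 4.1}, the lemma then follows by specialization. So it suffices to fix two elements $g$ and $g'$ both satisfying the norm bound and prove $c(g,y)^{-1}\phi(g)=c(g',y)^{-1}\phi(g')$. Rewriting with $\phi=c(\cdot,\overline{0})$, this identity is equivalent to
\begin{equation*}
c(g',y)c(g,y)^{-1}=\phi(g')\phi(g)^{-1}=c(g',\overline{0})c(g,\overline{0})^{-1},
\end{equation*}
so the goal becomes showing that $c(g',\cdot)c(g,\cdot)^{-1}$ takes the same value at $y$ and at $\overline{0}$.

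First I would connect the vertices $g^{-1}K$ and $g'^{-1}K$ by a path avoiding a large ball. Since $|g^{-1}K|>N(\|y\|+L)$ and $|g'^{-1}K|>N(\|y\|+L)$, Definition \ref{def 2.6} guarantees that both $g^{-1}K$ and $g'^{-1}K$ lie in the unbounded connected component of $\Lambda(S,K,G)\setminus B(\|y\|+L,1_{\Lambda(S, K, G)})$, which is unique by Proposition \ref{property of 2nd coset graph}(2). As this component is connected, there is a path $\gamma\colon\{0,\dots,n\}\to\Lambda(S,K,G)$ lying entirely in it with $\gamma(0)=g^{-1}K$ and $\gamma(n)=g'^{-1}K$; writing $\gamma(i)=g_iK$, we have $g_0=g^{-1}$, $g_n=g'^{-1}$, and every vertex of $\gamma$ has word norm $>\|y\|+L$.

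Next I would feed this path into Lemma \ref{lem 3.3}, which asserts that $c(g_n^{-1},\cdot)c(g_0^{-1},\cdot)^{-1}=c(g',\cdot)c(g,\cdot)^{-1}$ is determined by the restriction to $\mathcal{N}_L(\gamma\{0,\dots,n\})$. The key point is that $y$ and $\overline{0}$ agree on this neighborhood: any vertex of $\mathcal{N}_L(\gamma\{0,\dots,n\})$ is within distance $L$ of a vertex of $\gamma$, hence has word norm $>(\|y\|+L)-L=\|y\|$, and so by the definition of $\|y\|$ in Definition \ref{def 3.2} the corresponding coordinate of $y$ equals $x_0=\overline{0}_{\bullet}$. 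Lemma \ref{lem 3.3} then yields $c(g',y)c(g,y)^{-1}=c(g',\overline{0})c(g,\overline{0})^{-1}$, which is precisely the identity isolated above.

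The main obstacle is geometric rather than algebraic: one must know the two far-out vertices can be joined while staying outside the ball of radius $\|y\|+L$. This is exactly where the relative one-end hypothesis enters, through Proposition \ref{property of 2nd coset graph}(2) (so that the complement of a finite ball has a single unbounded component) together with the finiteness $N(r)<\infty$ from Definition \ref{def 2.6}. The only care needed beyond this is the bookkeeping of inverses, so that the endpoints $g^{-1}K$ and $g'^{-1}K$ correctly match the terms $c(g_n^{-1},\cdot)$ and $c(g_0^{-1},\cdot)^{-1}$ appearing in Lemma \ref{lem 3.3}.
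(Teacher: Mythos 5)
Your proof is correct and follows essentially the same route as the paper: connect $g^{-1}K$ to $g_y^{-1}K$ (in your notation, $g'^{-1}K$) by a path avoiding $B(\|y\|+L,1_{\Lambda(S,K,G)})$, observe that $y$ and $\overline{0}$ agree on the $L$-neighborhood of that path, and apply Lemma \ref{lem 3.3}. The only differences are presentational: you compare two arbitrary admissible elements before specializing to $g_y$, and you spell out why the path exists (both vertices lie in the unique unbounded component of the complement of the ball), which the paper leaves implicit.
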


\begin{proof}
Let $\gamma:\{0,\ldots,n\}\to \Lambda(S, K, G)$ be a path such that $\gamma$ connects $g^{-1}K$ to $g_y^{-1}K$ outside of $B(\|y\|+L,1_{\Lambda(S, K, G)})$ (so that $|\gamma(j)|>\|y\|+L$ for $j=0,\ldots, n$). Such a path exists because $|g^{-1}K|$ and $|g_y^{-1}K|$ are greater than $N(\|y\|+L)$. Since $|\gamma(j)|>\|y\|+L$ for all $j\in \{0,\ldots,n\}$ and $y_{gK}=x_0$ whenever $|gK|>\|y\|$, we have that $y_{gK}=x_0$ for any $gK\in\mathcal{N}_L(\gamma\{0,\ldots,n\})$, so by Lemma \ref{lem 3.3}, we have

$$c(g_y,y)c(g,y)^{-1}
=c(g_y,\overline{0})c(g,\overline{0})^{-1}
.$$
Consequently
$$c(g,y)^{-1}\phi(g)=c(g_y,y)^{-1}\phi(g_y)=b(y).\qedhere$$
\end{proof}

We now show that the restriction of $c$ to $G\times\Delta(\overline{0})$ is cohomologous to $\phi$ with transfer function $b$.

\begin{proposition}
\label{prop 4.3}
For all $g\in G$ and $y\in\Delta(\overline{0})$, we have $c(g,y)=b(gy)\phi(g)b(y)^{-1}$.
\end{proposition}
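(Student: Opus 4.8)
The plan is to verify the cocycle-coboundary identity $c(g,y)=b(gy)\phi(g)b(y)^{-1}$ directly, by choosing a single auxiliary element $h\in G$ that is simultaneously ``far away'' for both $y$ and $gy$, and then using Lemma \ref{lem 4.2} to rewrite both $b(y)$ and $b(gy)$ in terms of $c(\cdot,\cdot)$ evaluated at that same $h$. The key observation is that $gy$ again lies in $\Delta(\overline{0})$ (since $g$ only shifts finitely many non-$x_0$ coordinates around and permutes cosets by isometries), so $\|gy\|$ is finite and the recipe defining $b$ applies to $gy$ as well.

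First I would fix $g$ and $y$ and choose a large radius $R$ exceeding $N(\|y\|+L)$, $N(\|gy\|+L)$, and also large enough to absorb the bounded distortion caused by left-translation by $g$ (recall $G$ acts by isometries on $\Lambda(S,K,G)$, and $|gK|$ is finite, so $\|gy\|\leq \|y\|+|g^{-1}K|$ or a similar crude bound). Using Proposition \ref{property of commensurators} (equivalently Proposition \ref{finite union implies finite index}), I can pick $h\in G$ with $\min\{|hK|,|h^{-1}K|\}$ larger than every quantity just listed, and moreover with $\min\{|(g^{-1}h)K|,|(g^{-1}h)^{-1}K|\}$ also exceeding $N(\|y\|+L)$; such an $h$ exists because the set of cosets failing either condition lies in a finite ball, while the word norm on $\Lambda(S,K,G)$ is unbounded by Lemma \ref{lemma 2.3}.

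Next I would apply Lemma \ref{lem 4.2} twice. Applied to the pair $(h,gy)$ it gives $b(gy)=c(h,gy)^{-1}\phi(h)$, and applied to the pair $(g^{-1}h,y)$ it gives $b(y)=c(g^{-1}h,y)^{-1}\phi(g^{-1}h)$. Now I substitute these into the right-hand side $b(gy)\phi(g)b(y)^{-1}$ and simplify: since $\phi$ is a homomorphism, $\phi(h)\phi(g)\phi(g^{-1}h)^{-1}=\phi(h)\phi(h)^{-1}=\phi(e)=1$ after the appropriate cancellation, so the homomorphism factors disappear and I am left with $c(h,gy)^{-1}c(g^{-1}h,y)$. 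The remaining task is to show this equals $c(g,y)$. For that I use the cocycle identity for $c$: writing $h=(hg^{-1})g$ appropriately, the coinduced-action relation $c(h, gy)=c((hg^{-1})g, y)=c(hg^{-1}, gy)\,c(g,y)$ — after matching $g^{-1}h$ against $hg^{-1}$ via the correct grouping — collapses the two $c$-terms down to $c(g,y)$, provided the chosen $h$ makes the ``long'' cocycle values at $gy$ agree with those at $\overline 0$, which is exactly what the distance bounds guarantee through Lemma \ref{lem 3.3}.

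The main obstacle I anticipate is bookkeeping the noncommutativity of $H$ and getting the grouping in the cocycle identity exactly right, together with the companion distance estimate: I must be certain that translating by $g$ moves the relevant $L$-neighborhood of a connecting geodesic entirely outside $B(\|gy\|, 1_{\Lambda(S,K,G)})$, so that $gy$ and $\overline 0$ (or $y$ and $\overline 0$) are indistinguishable to $c$ on that neighborhood. This is the step where Lemma \ref{lem 3.3} together with the definition of $N(r)$ and the relative-one-end hypothesis $\widetilde e(G,K)=1$ (ensuring a single unbounded component, hence the existence of the required far-out connecting paths) all come together; once the geometry is pinned down, the algebra is a routine cancellation.
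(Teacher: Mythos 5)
Your overall strategy is the paper's: pick one auxiliary element that is simultaneously far away for $y$ and for $gy$, apply Lemma \ref{lem 4.2} twice, and collapse via the homomorphism property of $\phi$ and the cocycle identity. But the specific pairing you chose makes both collapses fail. With $b(gy)=c(h,gy)^{-1}\phi(h)$ and $b(y)=c(g^{-1}h,y)^{-1}\phi(g^{-1}h)$, the right-hand side becomes
$$b(gy)\phi(g)b(y)^{-1}=c(h,gy)^{-1}\,\phi(h)\phi(g)\phi(g^{-1}h)^{-1}\,c(g^{-1}h,y),$$
and since $\phi(g^{-1}h)^{-1}=\phi(h)^{-1}\phi(g)$, the middle block equals $\phi(h)\phi(g)\phi(h)^{-1}\phi(g)=\phi(hgh^{-1}g)$, which is neither $1$ nor $\phi(g)$ for a general noncommutative $H$; your claimed cancellation $\phi(h)\phi(g)\phi(g^{-1}h)^{-1}=1$ is false. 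The cocycle-identity step has the same defect: $c((hg^{-1})g,y)$ equals $c(h,y)$, not $c(h,gy)$, and the identity $c(hg^{-1}\cdot g,y)=c(hg^{-1},gy)\,c(g,y)$ involves the element $hg^{-1}$, which you cannot ``match against'' $g^{-1}h$ --- these are distinct elements of $G$ unless $g$ and $h$ commute. So the two $c$-terms you are left with, $c(h,gy)$ and $c(g^{-1}h,y)$, are not linked by any single application of the cocycle identity, and the argument as written does not close.

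The fix is to change the element used at $y$ from $g^{-1}h$ to $hg$ (equivalently, take the paper's $\tilde g=hg$, so that the element used at $gy$ is $\tilde g g^{-1}=h$), imposing the far-away condition $\min\{|hgK|,|(hg)^{-1}K|\}>N(\|y\|+L)$ in place of yours. Then $b(y)=c(hg,y)^{-1}\phi(hg)$, the homomorphism block is $\phi(h)\phi(g)\phi(hg)^{-1}=1$, and the cocycle identity $c(hg,y)=c(h,gy)\,c(g,y)$ collapses the remaining terms to exactly $c(g,y)$. This is precisely the computation in the paper's proof of Proposition \ref{prop 4.3}; the rest of your outline (existence of the far-away element via Proposition \ref{property of commensurators} or Proposition \ref{finite union implies finite index}, the observation that $gy\in\Delta(\overline{0})$, and the role of Lemma \ref{lem 4.2}) is sound.
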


\begin{proof}
Choose some $\tilde{g}\in G$ such that $\min\{|\tilde{g}g^{-1}K|, |g\tilde{g}^{-1}K|\}>N(\|gy\|+L)$ and $\min\{|\tilde{g}K|, |\tilde{g}^{-1}K|\}>N(\|y\|+L)$. This $\tilde{g}$ exists by Proposition \ref{property of commensurators} or Proposition \ref{finite union implies finite index}.
So by Lemma \ref{lem 4.2} we have $b(gy)=c(\tilde{g} g^{-1},gy)^{-1}\phi(\tilde{g} g^{-1})$ and $b(y)=c(\tilde{g},y)^{-1}\phi(\tilde{g})$.
It follows that $c(\tilde{g} g^{-1},gy)^{-1}=b(gy)\phi(\tilde{g} g^{-1})^{-1}$ and $c(\tilde{g},y)=\phi(\tilde{g})b(y)^{-1}$.

By the cocycle identity,
$$c(\tilde{g}, y)=c(\tilde{g} g^{-1},gy)c(g,y).$$
Rearranging, we obtain
$$c(g, y)=c(\tilde{g} g^{-1},gy)^{-1}c(\tilde{g},y)
=b(gy)\phi(g)b(y)^{-1}.\qedhere$$
\end{proof}

We now show that $y\mapsto b(y)$ factors through the restriction $y\mapsto y|_{B(3L, 1_{\Lambda(S, K, G)})}$, so that $b$ may be extended continuously to a function $Y\to H$.
\begin{proposition}\label{prop 4.4}
If $y',y\in\Delta(\overline{0})$ with $y'|_{B(3L, 1_{\Lambda(S, K, G)})}=y|_{B(3L, 1_{\Lambda(S, K, G)})}$, then $b(y')=b(y)$.
\end{proposition}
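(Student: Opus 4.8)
The plan is to use the bi-infinite geodesic from Lemma \ref{lemma 2.3} together with the separation estimate of Lemma \ref{lemma 2.4} to \emph{splice} $y$ and $y'$ into a single auxiliary configuration $\hat y\in\Delta(\overline 0)$ on which $b$ can be computed from either end of the geodesic, reading off $b(y)$ from one end and $b(y')$ from the other.

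First I would fix a bi-infinite geodesic $\gamma:\mathbb{Z}\to\Lambda(S,K,G)$ with $\gamma(0)=1_{\Lambda(S,K,G)}$ (Lemma \ref{lemma 2.3}), and write $\gamma(i)=g_iK$ with $g_0=e$. Set $P:=\mathcal{N}_L(\gamma(\mathbb{Z}_{\geq 0}))$ and $Q:=\mathcal{N}_L(\gamma(\mathbb{Z}_{\leq 0}))$, and define $\hat y\in X^{G/K}$ by $\hat y_{gK}:=y_{gK}$ for $gK\in P$, by $\hat y_{gK}:=y'_{gK}$ for $gK\in Q$, and by $\hat y_{gK}:=x_0$ otherwise. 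By Lemma \ref{lemma 2.4} we have $P\cap Q\subseteq B(3L,1_{\Lambda(S,K,G)})$, on which $y$ and $y'$ agree by hypothesis, so the two prescriptions are compatible and $\hat y$ is well defined; since $y,y'\in\Delta(\overline 0)$ one checks $\hat y\in\Delta(\overline 0)$ with $\|\hat y\|\leq\max(\|y\|,\|y'\|)$.

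Next I would evaluate $b$ from the positive end. Choose $n$ larger than $N(\|\hat y\|+L)$ and $N(\|y\|+L)$; since $\gamma$ is a geodesic through the origin and $d$ is left $G$-invariant, $|g_n^{-1}K|=|g_nK|=n$, so Lemma \ref{lem 4.2} gives $b(\hat y)=c(g_n^{-1},\hat y)^{-1}\phi(g_n^{-1})$ and $b(y)=c(g_n^{-1},y)^{-1}\phi(g_n^{-1})$. Applying Lemma \ref{lem 3.3} to the path $\gamma|_{\{0,\ldots,n\}}$ (noting $c(g_0^{-1},\cdot)=c(e,\cdot)=e$), the value $c(g_n^{-1},\cdot)$ is determined by the restriction to $\mathcal{N}_L(\gamma\{0,\ldots,n\})\subseteq P$, where $\hat y=y$; hence $c(g_n^{-1},\hat y)=c(g_n^{-1},y)$ and $b(\hat y)=b(y)$. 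Symmetrically, reversing $\gamma|_{\{-m,\ldots,0\}}$ into a path (legitimate since $S$ is symmetric, so each backward step carries a label $s^{-1}\in S$) and using a representative $g_{-m}$ far out in the negative direction, Lemma \ref{lem 3.3} localizes $c(g_{-m}^{-1},\cdot)$ to $\mathcal{N}_L(\gamma\{-m,\ldots,0\})\subseteq Q$, where $\hat y=y'$, giving $b(\hat y)=b(y')$. Combining, $b(y)=b(\hat y)=b(y')$.

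The main obstacle is the consistency of the splicing: the entire argument rests on $P$ and $Q$ overlapping only inside $B(3L,1_{\Lambda(S,K,G)})$, which is exactly Lemma \ref{lemma 2.4} and is what forces the radius $3L$ in the statement. The remaining work is routine bookkeeping: confirming that the localization neighborhoods $\mathcal{N}_L(\gamma\{0,\ldots,n\})$ and $\mathcal{N}_L(\gamma\{-m,\ldots,0\})$ produced by Lemma \ref{lem 3.3} indeed lie inside $P$ and $Q$ respectively, and that $g_n,g_{-m}$ meet the word-length bound demanded by Lemma \ref{lem 4.2} — both immediate because $\gamma$ is a geodesic based at $1_{\Lambda(S,K,G)}$.
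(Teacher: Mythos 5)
Your proof is correct and follows essentially the same route as the paper's: splice $y$ and $y'$ along the two halves of the bi-infinite geodesic from Lemma \ref{lemma 2.3}, use Lemma \ref{lemma 2.4} to make the splice consistent on $B(3L,1_{\Lambda(S,K,G)})$, and then combine Lemma \ref{lem 3.3} with Lemma \ref{lem 4.2} on each half to equate the $b$-values. You even make explicit two points the paper leaves implicit (taking $g_0=e$ so the factor $c(g_0^{-1},\cdot)$ is trivial, and reversing the geodesic via symmetry of $S$ for the negative half), so there is nothing to fix.
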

\begin{proof}
By Lemma \ref{lemma 2.3} there exists a geodesic $\gamma:\mathbb{Z}\to \Lambda(S, K, G)$. We will begin by finding 
$z\in\Delta(\overline{0})$ such that $z|_{\mathcal{N}_L(\gamma(\mathbb{Z}_{\geq 0}))}=y'|_{\mathcal{N}_L(\gamma(\mathbb{Z}_{\geq 0}))}$ and $z|_{\mathcal{N}_L(\gamma(\mathbb{Z}_{\leq 0}))}=y|_{\mathcal{N}_L(\gamma(\mathbb{Z}_{\leq 0}))}$.

Let $z_{gK}:=x_0$ for $gK\notin \mathcal{N}_L(\gamma(\mathbb{Z}_{\geq 0}))\cup\mathcal{N}_L(\gamma(\mathbb{Z}_{\leq 0}))$, and take $z_{gK}:=y'_{gK}$ for $gK\in\mathcal{N}_L(\gamma(\mathbb{Z}_{\geq 0}))$ and $z_{gK}:=y_{gK}$ for $gK\in\mathcal{N}_L(\gamma(\mathbb{Z}_{\leq 0}))$. For $gK\in\mathcal{N}_L(\gamma(\mathbb{Z}_{\geq 0}))\cap\mathcal{N}_L(\gamma(\mathbb{Z}_{\leq 0}))\subseteq B(3L, 1_{\Lambda(S, K, G)})$ (recall Lemma \ref{lemma 2.4}), these definitions agree by our assumption that $y'$ and $y$ agree on $B(3L, 1_{\Lambda(S, K, G)})$. Because $y',y\in\Delta(\overline{0})$, we also have $z\in\Delta(\overline{0})$.

By Lemma \ref{lem 3.3}, $c(g_n^{-1},y')=c(g_n^{-1},z)$ for all $n\geq 0$, where $\gamma(n)=g_nK$. By Lemma \ref{lem 4.2}, this implies that $b(y')=b(z)$. Arguing similarly with $\gamma(-n)$ in place of $\gamma(n)$, we see that $b(y)=b(z)$. It follows that $b(y')=b(y)$.
\end{proof}

Now, we are ready to prove Theorem \ref{main thm 1}.

\begin{proof}[Proof of Theorem \ref{main thm 1}]
Take $b$ and $\phi$ as given in Definition \ref{def 4.1}. We will first note that $b$ extends continuously to all of $Y$, and then we will show that this extension is a transfer function for a cohomology from $c$ to $\phi$. 

\textbf{Extending $b$ over $Y$.}
For $y\in Y$, let $\overline{y}\in\Delta(\overline{0})$ be given by setting $\overline{y}_{gK}:=y_{gK}$ for $gK\in B(3L, 1_{\Lambda(S, K, G)})$ and $\overline{y}_{gK}:=x_0$ for $gK\not\in B(3L, 1_{\Lambda(S, K, G)})$.  For arbitrary $y\in Y$ we may now define $b(y):=b(\overline{y})$. By definition $b$ is continuous, and by Proposition \ref{prop 4.4}, we have that $b:Y\to H$ agrees with Definition \ref{def 4.1} on $\Delta(\overline{0})$. We now proceed to show that $c(g,y)=b(gy)\phi(g)b(y)^{-1}$ for every $g\in G$ and $y\in Y$.

\textbf{Showing that $c$ is cohomologous to $\phi$ with transfer function $b$.} Given $g\in G$ and $y\in Y$, we shall apply Proposition \ref{prop 4.3} to some $\tilde{y}\in\Delta(\overline{0})$ approximating $y$. 

Let $\tilde{y}\in\Delta(\overline{0})$ be given by setting $\tilde{y}_{g'K}:=y_{g'K}$ for $g'K\in B(|g^{-1}K|+3L,1_{\Lambda(S, K, G)})$ and $\tilde{y}_{g'K}:=x_0$ for $g'K\not\in B(|g^{-1}K|+3L,1_{\Lambda(S, K, G)})$. By Lemma \ref{lemma 2.2}, there is a geodesic $\gamma:\{0,\ldots,|g^{-1}K|\}\to \Lambda(S, K, G)$ such that $\gamma(0)=1_{\Lambda(S, K, G)}=K$ and $\gamma(|g^{-1}K|)=g^{-1}K$. 

For such a geodesic, $\mathcal{N}_L(\gamma\{0,\ldots,|g^{-1}K|\})\subseteq B(|g^{-1}K|+3L,1_{\Lambda(S, K, G)})$. It follows by Lemma \ref{lem 3.3} that $c(g,y)c(1_G,y)^{-1}=c(g,\tilde{y})c(1_G,\tilde{y})^{-1},$ or in other words,
 $$c(g,y)=c(g,\tilde{y}).$$ 

We have $c(g,\tilde{y})=b(g\tilde{y})\phi(g)b(\tilde{y})^{-1}$ by Proposition \ref{prop 4.3}, as $\tilde{y}\in\Delta(\overline{0})$.

We have $b(\tilde{y})=b(\overline{y})=b(y)$ by definition. We see that
$$(g\tilde{y})|_{B(3L,1_{\Lambda(S, K, G)})}=(gy)|_{B(3L,1_{\Lambda(S, K, G)})}$$
because $g^{-1}B(3L,1_{\Lambda(S, K, G)})\subseteq B(|g^{-1}K|+3L,1_{\Lambda(S, K, G)})$. Thus, $b(g\tilde{y})=b(gy)$. It follows that
$$c(g,y)=c(g,\tilde{y})=b(g\tilde{y})\phi(g)b(\tilde{y})^{-1}=b(gy)\phi(g)b(y)^{-1}.\qedhere$$\qedhere
\end{proof}


\section{Proof of Theorem \ref{main thm 2}}\label{section on proof of theorem 2}

The proof is a direct generalization of the proof of Theorem 2 in \cite{cj}.

\begin{proof}[Proof of Theorem \ref{main thm 2}]
From Proposition \ref{property of relative ends} (2), we know that $H^1(G, \mathcal{F}_{K}G)$ is non-trivial if $\widetilde{e}(G, K)>1$. This means there is a cocycle $c: G\to \mathcal{F}_{K}G$ which is not a coboundary. 

Note that $\mathcal{F}_{K}G\cong \mathbb{Z}G\otimes_{\mathbb{Z}K}\mathbb{Z}_2^{K}$ as $\mathbb{Z}G$-modules by sending $A\in \mathcal{F}_{K}G$ to $\sum_is_i\otimes 1_{K_i}$, where $A=\sqcup_is_iK_i$ for finitely many $s_i\in G$, $K_i\subseteq K$.
Since $G$ is countable, we may find a countable $\mathbb{Z}K$-submodule $P$ of $\mathbb{Z}_2^{K}$ such that $c(G)\subseteq \mathbb{Z}G\otimes_{\mathbb{Z}K}P\subseteq \mathcal{F}_{K}G$. Let $M$ be any countable $\mathbb{Z}G$-module, we denote by $\widehat{M}$ its Pontryagin dual and $G\curvearrowright \widehat{M}$ the associated algebraic action (see \cite[Section 13.1]{kl}). Then the algebraic action $G\curvearrowright \widehat{\mathbb{Z}G\otimes_{\mathbb{Z}K}P}$ is conjugate to the coinduced action $G\curvearrowright (\widehat{P})^{G/K}$ by \cite[Proposition 6.23]{ben}. Now, we define a continuous cocycle $c': G\times (\widehat{P})^{G/K}\to \{\pm 1\}\subseteq \mathbb{T}$ by setting $c'(g, y)=y(c(g^{-1}))$ for all $g\in G$, $y\in Y=(\widehat{P})^{G/K}$.

Then the proof goes the same as the proof of Theorem 2 in \cite{cj}, we include the sketch below.

First, we claim that $c'$ is not trivial as a $\{\pm 1\}\cong \mathbb{Z}_2$-valued cocycle.

Assume not, then there exists a group homomorphism $\rho: G\to \mathbb{Z}_2$ and a continuous map $b: Y\to \mathbb{Z}_2\subset \mathbb{T}$ such that $c'(g, y)=b(gy)^{-1}\rho(g)b(y)$ for all $g\in G$ and all $y\in Y$. Now we apply the above equation to $y'=(0)_{G/K}$, where $0$ is a fixed point of the algebraic action $K\curvearrowright \widehat{P}$. Since $y'$ is $G$-invariant, we deduce $\rho(g)=1$ for all $g\in G$. Hence, $y(c(g^{-1}))=b(gy)^{-1}b(y)$ for all $g\in G$ and all $y\in Y$.

Next, we check we may assume $b\in \mathbb{Z}G\otimes_{\mathbb{Z}K}P$.

To see this, use the fact that $y_1(c(g^{-1}))y_2(c(g^{-1}))=(y_1+y_2)(c(g^{-1}))$ for all $y_1$, $y_2\in Y$. Hence, $b(gy_1)^{-1}b(y_1)b(gy_2)^{-1}b(y_2)=b(gy_1+gy_2)^{-1}b(y_1+y_2)$. Equivalently, $b(y_1)b(y_2)b(y_1+y_2)^{-1}=b(gy_1)b(gy_2)b(gy_1+gy_2)^{-1}$ for all $g\in G$ and $y_1$, $y_2\in Y$.

Now, since $\widetilde{e}(G, K)\neq 0$ implies $[G: K]=\infty$, Proposition \ref{product measure for coinduced actions} implies that the coinduced action is weakly mixing with respect to the $G$-invariant product measure $\mu:=\nu^{G/K}$, where $\nu$ is the Haar measure on $\widehat{P}$. Therefore, $b(y_1)b(y_2)b(y_1+y_2)^{-1}=\lambda$ for some constant $\lambda\in \mathbb{T}$ and $\mu\times \mu$-a.e. $(y_1, y_2)\in Y\times Y$. Since $b$ is continuous, after replacing $b$ with $\lambda^{-1}b$, we may assume $b\in \widehat{Y}=\mathbb{Z}G\otimes_{\mathbb{Z}K}P$. 

By Pontryagin duality, we deduce $c(g)=b-gb$ for all $g\in G$. A contradiction.\qedhere
\end{proof}

\section{Remarks}\label{section on remarks}
We conclude this note with some remarks.

(1) Coinduced actions may not be conjugate to full shifts. To see this, we can take $K\curvearrowright X$ to be a compact algebraic action of any countably infinite amenable group $K$. This action has zero entropy by \cite[Theorem 9.11]{kl}, then apply \cite[Proposition 6.22]{ben} to deduce that a coinduced action of this action to any larger sofic group still has zero entropy. 

(2) It is not clear whether the assumption of being ``commensurated" or the action $K\curvearrowright X$ has a fixed point can be removed. One may wonder whether one can use directly the Schreier coset graph associated to any subgroup $K$ rather than the one in Section \ref{section on geometry of coset graph}, the trouble is that it is not clear how to prove Lemma \ref{lem 3.3} without assuming any suitable normality condition on $K$.

(3) We list some examples of commensurated subgroups with relative one end below.

\begin{itemize}
\item Let $K$ be any finite subgroup and $G$ be a finitely generated group with one end. Then $\widetilde{e}(G, K)=e(G)=1$ by Proposition \ref{property of relative ends} (7).
\item Let $K$ be a finitely generated normal subgroup of $G$ such that $G/K$ has one end. Then $\widetilde{e}(G, K)=e(G/K)=1$ by Proposition \ref{property of relative ends} (8).
\end{itemize}
It is interesting to find more group pairs $(K, G)$ satisfying the above assumptions. One may also expect to give a combinatorial criterion for having $\widetilde{e}(G, K)=1$ similar to \cite[Theorem 9]{aas} for $e(G)=1$. Following the proof of \cite[Theorem 9]{aas}, one need to find a relative version of Stallings's theorem to characterize the pair $K\leq G$ such that $\widetilde{e}(G, K)\geq 2$, i.e. one expects $G$ splits over some subgroup related to $K$. This is loosely related to Kropholler's conjecture, see \cite{dun, kr} and reference therein for more discussion on this conjecture, I have not studied this question further. But one can show some group pairs $(K, G)$ appeared in \cite{d} satisfy $\widetilde{e}(G, K)=1$ indirectly.\\

\textbf{Acknowledgements:} Part of this paper was finished while the author was in SUNY at Buffalo, he thanks his advisor Prof. Hanfeng Li for support there. The author was supported by Science Research Center Program through NRF funded by the Ministry of Science, ICT \& Future Planning (No. NRF-2016R1A5A1008055). He also thanks the anonymous referee for helpful comments.

\begin{bibdiv}
\begin{biblist}

\bib{anan}{article}{
   author={Anantharaman-Delaroche, C.},
   title={Approximation properties for coset spaces and their operator
   algebras},
   conference={
      title={The varied landscape of operator theory},
   },
   book={
      series={Theta Ser. Adv. Math.},
      volume={17},
      publisher={Theta, Bucharest},
   },
   date={2014},
   pages={23--45},}

\bib{aas}{article}{
title={A simple criterion for non-relative hyperbolicity and one-endedness of groups},
author={Anderson, J. W.},
author={Aramayona, J.},
author={Shackleton, K. J.},
status={arXiv: math/0504271v3}
}

\bib{af}{article}{
   author={Andrade, M. G. C.},
   author={Fanti, E. L. C.},
   title={A relative cohomological invariant for group pairs},
   journal={Manuscripta Math.},
   volume={83},
   date={1994},
   number={1},
   pages={1--18},}

\bib{bc}{article}{
   author={Bost, J.-B.},
   author={Connes, A.},
   title={Hecke algebras, type III factors and phase transitions with
   spontaneous symmetry breaking in number theory},
   journal={Selecta Math. (N.S.)},
   volume={1},
   date={1995},
   number={3},
   pages={411--457},}

\bib{cj}{article}{
   author={Chung, N.-P.},
   author={Jiang, Y.},
   title={Continuous cocycle superrigidity for shifts and groups with one
   end},
   journal={Math. Ann.},
   volume={368},
   date={2017},
   number={3-4},
   pages={1109--1132},
   }

\bib{cj1}{article}{
author={Chung, N.-P.},
author={Jiang, Y.},
title={Divergence, undistortion and H\"{o}lder continuous cocycle superrigidity for full shifts},
status={arXiv: 1709.10218},
}

\bib{cohen}{article}{
author={Cohen, D.},
title={Continuous cocycle superrigidity for the full shift over a finitely generated torsion group},
status={arXiv: 1706.03743}}

\bib{cm}{article}{
   author={Conner, G.},
   author={Mihalik, M.},
   title={Commensurated subgroups and ends of groups},
   journal={J. Group Theory},
   volume={16},
   date={2013},
   number={1},
   pages={107--139},}   
   
\bib{d}{article}{
author={Drimbe, D.},
title={Cocycle and orbit equivalence superrigidity for coinduced actions},
journal={Ergod. Theory Dyn. Syst.},
number={1-22},
note={doi:10.1017/etds.2016.134}
}   

\bib{dun}{article}{
title={Structure trees, networks and almost invariant sets},
author={Dunwoody, M. J.},
status={arXiv: 1601.06965}
}

\bib{freu}{article}{
   author={Freudenthal, H.},
   title={\"Uber die Enden diskreter R\"aume und Gruppen},
   language={German},
   journal={Comment. Math. Helv.},
   volume={17},
   date={1945},
   pages={1--38},}

\bib{gtm243}{book}{
   author={Geoghegan, R.},
   title={Topological methods in group theory},
   series={Graduate Texts in Mathematics},
   volume={243},
   publisher={Springer, New York},
   date={2008},
   pages={xiv+473},}

\bib{ben}{article}{
   author={Hayes, B.},
   title={Fuglede-Kadison determinants and sofic entropy},
   journal={Geom. Funct. Anal.},
   volume={26},
   date={2016},
   number={2},
   pages={520--606},}

\bib{hopf}{article}{
   author={Hopf, H.},
   title={Enden offener R\"aume und unendliche diskontinuierliche Gruppen},
   language={German},
   journal={Comment. Math. Helv.},
   volume={16},
   date={1944},
   pages={81--100},}

\bib{hou}{article}{
   author={Houghton, C. H.},
   title={Ends of locally compact groups and their coset spaces},
   note={Collection of articles dedicated to the memory of Hanna Neumann,
   VII},
   journal={J. Austral. Math. Soc.},
   volume={17},
   date={1974},
   pages={274--284},}   
\bib{kl}{book}{
   author={Kerr, D.},
   author={Li, H.},
   title={Ergodic theory: Independence and Dichotomies},
   series={Springer Monographs in Mathematics},
   publisher={Springer, Cham},
   date={2016},
   pages={xxxiv+431},}

 \bib{kri}{article}{
   author={Krieg, A.},
   title={Hecke algebras},
   journal={Mem. Amer. Math. Soc.},
   volume={87},
   date={1990},
   number={435},
   pages={x+158},}  
   
\bib{kr}{article}{
   author={Kropholler, P. H.},
   author={Roller, M. A.},
   title={Relative ends and duality groups},
   journal={J. Pure Appl. Algebra},
   volume={61},
   date={1989},
   number={2},
   pages={197--210},}
   
\bib{li}{article}{
title={Continuous orbit equivalence},
author={Li, X.},
journal={Ergodic Theory Dynam. Systems},
   volume={38},
   date={2018},
   number={4},
   pages={1543--1563},
}   

\bib{li2}{article}{
title={Dynamic characterizations of quasi-isometry, and applications to cohomology},
author={Li, X.},
status={arXiv: 1604.07375},
}

\bib{popa1}{article}{
   author={Popa, S.},
   title={Cocycle and orbit equivalence superrigidity for malleable actions
   of $w$-rigid groups},
   journal={Invent. Math.},
   volume={170},
   date={2007},
   number={2},
   pages={243--295},}
   
\bib{popa2}{article}{
   author={Popa, S.},
   title={On the superrigidity of malleable actions with spectral gap},
   journal={J. Amer. Math. Soc.},
   volume={21},
   date={2008},
   number={4},
   pages={981--1000},}   
   
\bib{sch}{article}{
   author={Schmidt, K.},
   title={The cohomology of higher-dimensional shifts of finite type},
   journal={Pacific J. Math.},
   volume={170},
   date={1995},
   number={1},
   pages={237--269},}

\bib{sco}{article}{
   author={Scott, P.},
   title={Ends of pairs of groups},
   journal={J. Pure Appl. Algebra},
   volume={11},
   date={1977/78},
   number={1-3},
   pages={179--198},}   
   
  \bib{sw}{article}{
   author={Shalom, Y.},
   author={Willis, G. A.},
   title={Commensurated subgroups of arithmetic groups, totally disconnected
   groups and adelic rigidity},
   journal={Geom. Funct. Anal.},
   volume={23},
   date={2013},
   number={5},
   pages={1631--1683},} 
\bib{tz}{article}{
   author={Tzanev, K.},
   title={Hecke $C^*$-algebras and amenability},
   journal={J. Operator Theory},
   volume={50},
   date={2003},
   number={1},
   pages={169--178},}

\end{biblist}
\end{bibdiv}

\end{document}